\newcommand{\tiff}{if and only if }
\newcommand{\las}{locally asymptotically stable }
\newcommand{\mybeta}{\beta \left(a_1,a_2\right)}
\newcommand{\mygamma}{\gamma \left(a_1,a_2\right)}
\newtheorem{theorem}{Theorem}
\newtheorem{corollary}[theorem]{Corollary}
\newtheorem{remark}[theorem]{Remark}
\newtheorem{proposition}[theorem]{Proposition}
\title{Oscillations in a white blood cell production model with multiple differentiation stages.}
\author[1]{Franziska Knauer}
\author[2]{Thomas Stiehl}
\author[3]{Anna Marciniak-Czochra}
\affil[1]{Institute for Applied Mathematics, Heidelberg University, Im Neuenheimer Feld 205, 69120 Heidelberg, Germany}
\affil[2]{Institute for Applied Mathematics, Interdisciplinary Center for Scientific Computing, Heidelberg University, Im Neuenheimer Feld 205, 69120 Heidelberg, Germany, thomas.stiehl@iwr.uni-heidelberg.de}  
\affil[3]{Institute for Applied Mathematics, Interdisciplinary Center for Scientific Computing, Heidelberg University, Im Neuenheimer Feld 205, 69120 Heidelberg, Germany, anna.marciniak@iwr.uni-heidelberg.de}
\date{}
\begin{document}

\maketitle

\begin{abstract}
In this work we prove occurrence of a super-critical Hopf bifurcation in a model of white blood cell formation structured by three maturation stages. We provide an explicit analytical expression for the bifurcation point depending on model parameters. The Hopf bifurcation is a unique feature of the multi-compartment structure as it does not exist in the corresponding two-compartment model. It appears for a parameter set different from the parameters identified for healthy hematopoiesis and requires changes in at least two cell properties. Model analysis allows identifying a range of biologically plausible parameter sets that can explain persistent oscillations of white blood cell counts observed in some hematopoietic diseases. Relating the identified parameter sets to recent experimental and clinical findings provides insights into the pathological mechanisms leading to oscillating blood cell counts.\\

\noindent Keywords: {Hopf bifurcation, hematopoiesis, oscillating blood cell counts, mathematical model, stem cells.}
\end{abstract}

\section{Introduction}

This work is devoted to the study of Hopf bifurcations and emergence of oscillatory dynamics in a multi-compartmental model of healthy blood cell production (hematopoiesis). The model describes a multi-stage blood cell production process based on self-renewal and differentiation of stem and progenitor cells, which is needed for regeneration of mature white blood cells. Each maturation stage is treated as a homogeneous compartment and its time evolution is described by an ordinary differential equation with coefficients controlled by a nonlinear feedback signal that depends on the count of mature cells. The model was introduced in ref.  \cite{SCDev} and then has been applied to study blood cell recovery after bone marrow transplantation \cite{BMT,AdvExpMedBiol} and extended to model evolution and response to therapy of hematological diseases such as acute leukemias  \cite{SciRep,MMNP,Interface,CR} and myelodysplastic syndromes \cite{Walenda}. Although mathematical understanding of the underlying equations proved to be useful for model applications and interpretation in context of the patients' data \cite{Busse,Interface}, rigorous analysis of the underlying equations has been established only in the case of a two-compartment maturation structure in ref. \cite{Nakata}. In this work, we close the gap and provide analysis of a system involving an intermediate differentiation stage given by a three-compartment structure. While in case of the two-compartment model, the positive equilibrium is globally stable whenever it exists \cite{Nakata}, our analysis shows that increasing the number of compartments may lead to the loss of stability of the positive equilibrium due to a super-critical Hopf bifurcation. This finding is of biological relevance, since it shows that the number of maturation stages may impact the system dynamics. \\ 
 
 Periodic oscillations in a model with non-linear feedback mechanisms but without explicit delays have not been studied in the context of hematopoietic system so far. Our model shows that we can have cycling hematopoiesis as inherent property of the multistep maturation process, however, arising far away from the parameter regime corresponding to the healthy system. Periodic oscillations of blood cell counts are a rare but intriguing phenomenon that can be observed in humans and animals \cite{Guerry,Dale,Dale2}. Cyclic neutropenia is the most frequent disease with oscillating blood cell counts.  In cyclic neutropenia patients' neutrophil counts show periodic oscillations with maxima that are significantly below the neutrophil counts of healthy individuals. Since neutrophils are responsible for immune defence, patients repeatedly suffer from infections \cite{Dale3}. The disease can be cured by transplantation of healthy bone marrow \cite{Okolo,Dale4}. Similarly, accidental transplantation of bone marrow from a patient with cyclic neutropenia transfers the disorder to a previously unaffected host as has been shown in ref.  \cite{Krance}.  A mechanistic understanding of the disease, therefore, requires quantitative insights into blood cell formation and its regulations.\\

Cyclic neutropenia has been extensively studied using mathematical models. One hypothesis derived from mathematical models is that oscillations are caused by increased apoptosis /reduced proliferation of neutrophil precursors \cite{Bernard,Lei} combined with a reduced entry of stem cells into the proliferative phase \cite{Colijn}. An alternative mechanism could be an increase of the death rates of stem cells \cite{Lei,Mackey}. Other model-derived hypotheses for the origin of cyclic neutropenia include reduced maturation speed \cite{Wheldon} or dysfunction of feedback mechanisms \cite{Schulthess}. Experimental studies  suggest abnormal responsiveness of cells to growth factors \cite{Hammond,Wright}
or increased apoptosis of progenitor cells \cite{Grenda} as possible reasons for the origin of periodic oscillations. One common feature of most models of cyclic neutropenia is that they include  constant or distributed delays. Intuitively, a system with feedbacks the effects of which occur with a delay, can be supposed to oscillate if the feedback loop gain is large enough. However, whether oscillations indeed appear, depends on configuration of feedbacks. In ref. \cite{Dingli} it has been shown that a reduction of progenitor cell's self-renewal is sufficient to explain oscillatory dynamics in a model with linear feedback regulation and without delays. However, a system of linear compartments effectively constitutes a delay distributed according to a convolution of negative exponential functions, i.e. non-central gamma type, \cite{Johnson}(Section 17.8.7). Other mathematical modeling works studying cyclic neutropenia include  \cite{King-Smith,Kazarinoff,Haurie,Gopalsamy}. Different modeling approaches are reviewed in \cite{Haurie2,Colijn2}. \\

The paper is organized as follows. In Section 2 we present a derivation of the considered model and its biological justification. In Section 3 we provide analytical results, including uniform boundedness of solutions and linear stability analysis. We provide criteria for the occurrence of a Hopf bifurcation and illustrate them by model simulations. In Section 4 we study systematically for which subsets of the biologically relevant parameter space Hopf bifurcations occur and we relate our findings to experimental results. Section 5 concludes with a short summary and a discussion of the obtained results.   

\section{Model motivation and formulation}\label{sec:model}

Blood cells are continuously produced during the life of higher metazoans. This task is fulfilled by the hematopoietic (blood forming) system which is located in the bone marrow \cite{Reya}. Hematopoietic stem cells (HSC) give rise to progenitor cells which subsequently produce mature cells \cite{Reya}. Due to its vital importance hematopoiesis is a tightly regulated process. Complex nonlinear feedback mechanisms allow the organisms to adapt to environmental conditions and to efficiently respond to perturbations such as blood loss or infection \cite{Metcalf}. Key processes during hematopoiesis are cell proliferation, self-renewal and differentiation. Proliferation denotes the division of one parent cell into two progeny. If progeny are of the same cell type as the parent, e.g., a progeny of a stem cell is again a stem cell, this process is referred to as self-renewal. The alternative scenario, where progeny are of a more mature cell type compared to their parent cell is referred to as differentiation \cite{COISB,SCREP}.\\

 In this work, we study for which configurations of proliferation and self-renewal parameters periodic oscillations of blood cell counts can occur. We focus on a three-compartment version of the model describing dynamics of stem cells, progenitor cells and mature cells. Dynamics of each cell population is  described by an ordinary differential equation. Denoting the number of cells per kg of body weight at time $t$ as $u_i(t)$, where $i=1$ corresponds to stem cells, $i=2$ to progenitor cells and $i=3$ to mature white cells,  each cell type is characterized by the following parameters: 
\begin{itemize}
	\item Proliferation rate $p_i$, describing the frequency of cell divisions per unit of time. In accordance with biology we assume that mature cells do not divide \cite{Jandl}.
	\item Fraction of self-renewal $a_i$,
	describing the fraction of progeny cells originating from division and  returning to the compartment of their parent cell.
	\item Death rate $d_i$, describing the fraction of cells dying per
	unit of time. For simplicity, we assume that immature
	cells ($i = 1, 2$) do not die and that mature cells die at
	constant rates. This is a good approximation  of reality \cite{BMT,Jandl}, 
\end{itemize}

 {White blood cell production is regulated by negative feedback signals, such as G-CSF \cite{Metcalf,Layton}. Since signal dynamics take place on a faster time scale compared to cell divisions, a quasi-steady state approximation can be used to describe the signal concentration as a function of white blood cell counts \cite{SCDev,MCM,M2AS}.
 $$s(t)=\frac{1}{1+k{u_3(t)}}, $$
{where $k>0$, see \cite{SCDev} for details.} Rigorous proof of the corresponding quasi-steady state model reduction is presented in ref.  \cite{M2AS}.\\

Following the previous work \cite{SCDev,BMT,AdvExpMedBiol}, we assume feedback inhibition of the fraction of self-renewal by mature cells, i.e. $a_i(t) = a_is(t)$.\\

The flux to division of healthy cells in compartment $i$  at time $t$ equals $p_i{u}_i(t)$.
During division, a parent cell is replaced by two progeny cells. The outflux from mitosis at time $t$, therefore, equals $2p_i {u}_i(t)$,
of which the fraction $2a_i(t)p_i {u}_i(t)$ stays in compartment $i$ (referred to as self-renewal). The fraction $2(1-a_i(t))p_i {u}_i(t)$ proceeds to maturation stage $i+1$ (process referred to as differentiation). Taking into account that mature cells do not divide and that the parent cell disappears as it gives rise to its progeny, we obtain the following system of differential equations

\begin{align*}
\frac{du_1}{dt} & = \left(2\,\frac{a_1}{1+k\,u_3}-1\right)\,p_1\,u_1 \tag{M1} \label{M1}\\
\frac{du_2}{dt} & = \left(2\,\frac{a_2}{1+k\,u_3}-1\right)\,p_2\,u_2 + 2\,\left(1-\frac{a_1}{1+k\,u_3}\right)\,p_1\,u_1 \tag{M2} \label{M2}\\
\frac{du_3}{dt} & = 2\,\left(1-\frac{a_2}{1+k\,u_3}\right)\,p_2\,u_2 - d_3\,u_3, \tag{M3} \label{M3}
\end{align*}

where $p_1, p_2 >0, d_3 >0$ and $k > 0$. The initial conditions fulfill $u_1(0)>0$, $u_2(0)\geq 0$, $u_3(0)\geq 0$. A schematic of the model is depicted in Figure \ref{figModel}.\\

\begin{figure}%
	\centering
	\includegraphics[width=\columnwidth]{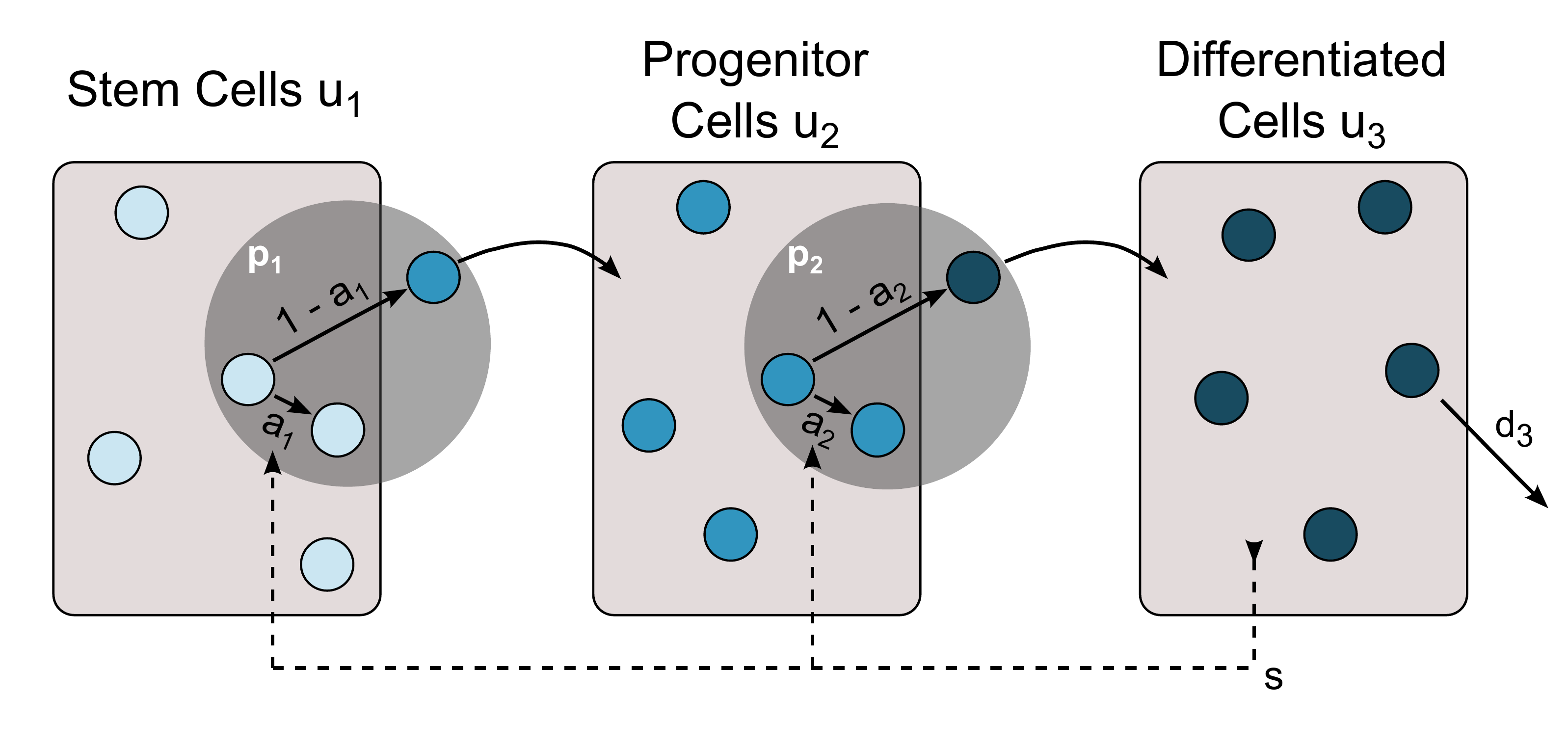}%
	\caption[Scheme of the cell fluxes.]{Scheme of the model. $p_{1,2}$ denote the proliferation rates, $a_{1,2}$ the fractions of self-renewal, $d_{3}$ the death rate, and s the feedback-signal.}%
	\label{figModel}%
\end{figure}

In order to simplify our calculations, we finally rewrite the model equations in dimensionless terms using the reparametrizations $\tilde{t} := p_1\,t$, $\tilde{p}_2 := \frac{p_2}{p_1}$, $\tilde{d_3} := \frac{d_3}{p_1}$, and $\tilde{u}_i\left(\tilde{t}\right) := u_i\left(\frac{\tilde{t}}{p_1}\right)$ for $i = 1, 2, 3$:
\begin{align*}
\frac{d\tilde{u}_1}{dt} & = \left(2\,\frac{a_1}{1+k\,\tilde{u}_3}-1\right)\,\tilde{u}_1 \tag{M1*} \label{M1*}\\
\frac{d\tilde{u}_2}{dt} & = \left(2\,\frac{a_2}{1+k\,\tilde{u}_3}-1\right)\,\tilde{p}_2\,\tilde{u}_2 + 2\,\left(1-\frac{a_1}{1+k\,\tilde{u}_3}\right)\,\tilde{u}_1 \tag{M2*} \label{M2*}\\
\frac{d\tilde{u}_3}{dt} & = 2\,\left(1-\frac{a_2}{1+k\,\tilde{u}_3}\right)\,\tilde{p}_2\,\tilde{u}_2 - \tilde{d_3}\,\tilde{u}_3 \tag{M3*} \label{M3*}
\end{align*}
The parameter ranges and initial conditions remain unchanged. For convenience, we drop the \textasciitilde-symbol in the remainder of this paper.

\section{Model analysis}\label{sec:results}

In this section we show uniform boundedness of solutions (Section \ref{sub:existence}), provide conditions for existence of non-negative steady states (Section \ref{sub:steadystates}) and linearized stability analysis (Section \ref{sub:stability}), and prove occurrence of Hopf bifurcation (Section \ref{sub:hopf}).
\subsection{Uniform boundedness of solutions}\label{sub:existence}

\begin{theorem}\label{thm:ExAndBoundedness}
	Solutions of \eqref{M1}-\eqref{M3} with positive initial values remain in the first octant and for sufficiently large times $t$ even in a compact subset $C$ which does not depend on the initial values. 
\end{theorem}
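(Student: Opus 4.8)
The plan is to treat the two assertions separately: first positive invariance of the closed first octant, then uniform ultimate boundedness into a fixed compact box.

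For positivity I would begin with \eqref{M1}, which is linear in $u_1$: integrating gives $u_1(t)=u_1(0)\exp\!\big(p_1\int_0^t(2a_1/(1+ku_3(s))-1)\,ds\big)>0$, so $u_1$ stays strictly positive. For the remaining faces I would invoke the standard subtangentiality (inward-pointing vector field) criterion for invariance of a closed set: on $\{u_2=0\}$ the right-hand side of \eqref{M2} reduces to $2(1-a_1/(1+ku_3))p_1u_1$, and on $\{u_3=0\}$ the right-hand side of \eqref{M3} reduces to $2(1-a_2)p_2u_2$; both are nonnegative because $a_1,a_2\le 1$ (a self-renewal \emph{fraction}) and $u_1,u_2\ge 0$. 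Hence the flow never leaves the first octant.

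For boundedness I would first rule out finite-time blow-up, so that ``sufficiently large $t$'' is meaningful: since $u_3\ge0$ forces the growth rate in \eqref{M1} to be at most $(2a_1-1)p_1$, $u_1$ grows at most exponentially on any $[0,T]$; feeding this into the linear equations \eqref{M2} and then \eqref{M3} gives at most exponential growth of $u_2,u_3$, hence global existence. The key structural fact I would then record is the feed-forward cascade $u_1\to u_2\to u_3$ with removal only in the last compartment, made quantitative by the total-mass identity $\frac{d}{dt}(u_1+u_2+u_3)=p_1u_1+p_2u_2-d_3u_3$. This suggests bounding the variables in the order $u_1$, then $u_2$, then $u_3$: once $u_1\le M_1$ for large $t$, \eqref{M2} yields a bound $u_2\le M_2$, and then \eqref{M3} gives $\dot u_3\le 2p_2M_2-d_3u_3$, whence $\limsup u_3\le 2p_2M_2/d_3=:M_3$ and the attracting box $C=[0,M_1]\times[0,M_2]\times[0,M_3]$ is independent of the initial data.

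The main obstacle is the first bound, on $u_1$, in the biologically relevant regime $a_1>1/2$ (the one giving a positive steady state $u_3^{*}=(2a_1-1)/k$). There $u_1$ has \emph{no} pointwise dissipation estimate: its growth rate in \eqref{M1} is positive exactly while $u_3<u_3^{*}$, and one checks that no linear functional $\alpha_1u_1+\alpha_2u_2+\alpha_3u_3$ can be forced to decay pointwise when $1/2<a_1<1$, so a one-line Lyapunov estimate is unavailable. The bound must instead be extracted from the \emph{closed negative-feedback loop over time}: a persistently large $u_1$ pumps differentiated cells into $u_2$ through the flux $2(1-a_1/(1+ku_3))p_1u_1$, which drives $u_3$ above the threshold $u_3^{*}$ and thereby forces $\dot u_1<0$ with a rate bounded away from zero. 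I would make this rigorous by contradiction using a fluctuation argument: if $u_1$ were unbounded, choosing times $t_n$ with $u_1(t_n)=\max_{[0,t_n]}u_1\to\infty$ gives $\dot u_1(t_n)\ge0$, hence $u_3(t_n)\le u_3^{*}$; tracking $u_2$ and $u_3$ on the intervals between consecutive running maxima then shows that a sustained large $u_1$ accumulates enough mature cells to push $u_3$ past $u_3^{*}$, contradicting $u_3(t_n)\le u_3^{*}$ at the next maximum and yielding an explicit $M_1$. The same feedback mechanism handles $u_2$ when $a_2\ge1/2$; otherwise the elementary estimate $\dot u_2\le(2a_2-1)p_2u_2+2p_1M_1$ suffices.
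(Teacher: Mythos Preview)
Your treatment of positive invariance and global existence is fine, and the cascade strategy (bound $u_1$, then $u_2$, then $u_3$) is exactly what the paper does. The divergence, and the weak point, is in the key step: bounding $u_1$ when $a_1>\tfrac12$.

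Your fluctuation argument, as written, does not close. From $\dot u_1(t_n)\ge 0$ at running maxima you correctly get $u_3(t_n)\le u_3^{*}$. You then claim that a sustained large $u_1$ forces $u_3$ above $u_3^{*}$, ``contradicting $u_3(t_n)\le u_3^{*}$ at the next maximum''. But that is not a contradiction: nothing prevents $u_3$ from exceeding $u_3^{*}$ between maxima and dipping back below $u_3^{*}$ precisely at $t_{n+1}$. To salvage the idea you would have to show quantitatively that, once $u_1$ is large, the loss of $u_1$ during the phase $u_3>u_3^{*}$ strictly dominates the gain during the phase $u_3<u_3^{*}$, uniformly in the amplitude; this is a genuine estimate on the three-dimensional flow, not a one-line fluctuation observation, and your sketch does not supply it. The same difficulty recurs for $u_2$ when $a_2\ge\tfrac12$.

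The paper sidesteps this entirely with a ratio trick that converts the delayed feedback into an instantaneous one. Set $v_1:=u_1/u_2$ and $v_2:=u_2/u_3$. A direct computation gives
\[
\dot v_1<\bigl[1+p_2-2(1-a_1)v_1\bigr]v_1,\qquad \dot v_2<\bigl[p_2+d_3+2B_1-2(1-a_2)v_2\bigr]v_2,
\]
so each ratio is eventually bounded by an explicit constant $B_i$ independent of initial data. This yields the pointwise comparison $u_3\ge u_1/(B_1B_2)$ for large $t$, which, substituted into \eqref{M1}, makes the feedback on $u_1$ \emph{instantaneous}: $\dot u_1\le\bigl(\tfrac{2a_1}{1+k\,u_1/(B_1B_2)}-1\bigr)u_1<0$ once $u_1>(2a_1-1)B_1B_2/k$. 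The bounds on $u_2$ and $u_3$ then follow exactly by your cascade, using $u_3\ge u_2/B_2$ for the $u_2$-step. This is the idea you are missing; it replaces a dynamical argument about time-averaged feedback by a pointwise lower bound on $u_3$ in terms of $u_1$.
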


\begin{proof}
The proof follows the lines of the proof in ref. \cite{Busse}, adjusted to the three-compartment structure of the model. We consider the rescaled system \eqref{M1*}-\eqref{M3*} and define $C$ as the cuboid bounded by the planes $\lbrace (u_1,u_2,u_3) \in \mathbb{R}^3 \mid u_i=0 \rbrace$ and $\lbrace (u_1,u_2,u_3) \in \mathbb{R}^3 \mid u_i=C_i \rbrace$ for $i = 1,2,3$. Obviously, the orbits of solutions with nonnegative initial values remain in the first octant. To show uniform boundedness from above, we compute equations for the fractions $v_1 := \frac{u_1}{u_2}$ and $v_2 := \frac{u_2}{u_3}$ that lead to the following estimates,
	\begin{align*}
	\frac{dv_1}{dt} &< \left[1 + p_2 - 2 \, \left(1-a_1\right)\,v_1\right]\,v_1 < 0 \quad &&\text{ for } v_1 \geq \frac{1+p_2}{2 \, \left(1-a_1\right)} =: B_1\\
	\frac{dv_2}{dt} &< \left[p_2 + d_3 + 2\,B_1-2\left(1-a_2\right)\,v_2\right]\,v_2 < 0 \quad &&\text{ for } v_2 \geq \frac{p_2+d_3+2\, B_1}{2 \, \left(1-a_2\right)} =: B_2,
	\end{align*}
for all $u_3 \geq 0$.
	Taking $M_i:=\max\{B_i,v_i(0)\}$, we conclude $u_2 \geq \frac{1}{M_1} \,u_1$ and $u_3 \geq \frac{1}{M_2}u_2 \geq u_1 \, \frac{1}{M_2\,M_1}$. With these relations we obtain 
	\begin{equation*}
	\begin{gathered}
	\frac{du_1}{dt} \leq \left(2\,\frac{a_1}{1+\frac{k}{M_2\,M_1}\,u_1}-1\right)\,u_1<0,\;\; \text{ for } \;\; u_1 > 	\frac{M_2\,M_1}{k}\,\left(2\,a_1-1\right)=:K_1
	\end{gathered}
	\end{equation*}
	and 
	\begin{equation*}\label{u2}
	\begin{gathered}
	\frac{du_2}{dt} \leq \left(2\,\frac{a_2}{1+\frac{k}{M_2}\,u_2}-1\right)\,p_2\,u_2 + 2\, K_1<0\\
	\text{for } u_2 > \operatorname{max}\left\{\frac{4\,a_2-1}{k}\,M_2, \frac{4}{p_2}\,K_1  \right\}=:K_2
	\end{gathered}
	\end{equation*}
	as well as
	\begin{equation*}
	\begin{gathered}
	\frac{du_3}{dt} \leq 2\,p_2\,K_2 -d_3\,u_3< 0 \\
	\text{for } u_3>\frac{2\,p_2\,K_2}{d_3}=:K_3.
	\end{gathered}
	\end{equation*}
Taking $C_i=\max\{K_i, u_i(0)\}$, we conclude about invariance of the set $C$.
	
\end{proof}

\subsection{Existence of steady states}\label{sub:steadystates}

Existence and uniqueness of steady states has been systematically studied in ref. \cite{MCM}. The following proposition summarizes the results. 

\begin{proposition}\label{Prop1}
	\begin{enumerate}
		\item The trivial steady state $E_0 = \left(\bar{u}_1^0,\bar{u}_2^0,\bar{u}_3^0\right)^{\mathrm T} = \left(0,0,0\right)^{\mathrm T}$ of (\ref{M1}) - (\ref{M3}) exists for all parameter values.
		\item There exists a semi-trivial steady state $E_1 = \left(0,\bar{u}_2^1,\bar{u}_3^1\right)^{\mathrm T}$ of (\ref{M1}) - (\ref{M3}) with positive components $\bar{u}_2^1,\bar{u}_3^1$ given by
		\begin{align*}
		E_1 = \left(0,\frac{d_3}{p_2}\,\bar{u}_3^1,\frac{2\,a_2-1}{k}\right)^{\mathrm T}
		\end{align*}
		if and only if
		\[
		a_2 > \frac{1}{2}.
		\]
		\item There exists a strictly positive steady state $E_2 = \left(\bar{u}_1^2,\bar{u}_2^2,\bar{u}_3^2\right)^{\mathrm T}$ of (\ref{M1}) - (\ref{M3}) given by
		\begin{align*}
		E_2 = \left(\left(1-\frac{a_2}{a_1}\right)\,\frac{p_2}{p_1}\,\bar{u}_2^2, 
		\frac{d_3}{\left(2-\frac{a_2}{a_1}\right)\,p_2}\,\bar{u}_3^2, \frac{2\,a_1-1}{k}\right)^{\mathrm T}
		\end{align*}
		if and only if
		\begin{align*}
		a_1 > \frac{1}{2} \text{ and } a_2 < a_1. 
		\end{align*}
	\end{enumerate}
	
\end{proposition}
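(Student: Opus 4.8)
The plan is to determine all equilibria by setting the right-hand sides of \eqref{M1}--\eqref{M3} to zero and exploiting the product structure of the resulting algebraic system. The crucial observation is that the steady-state version of \eqref{M1}, namely $\left(2\,\frac{a_1}{1+k\,u_3}-1\right)p_1\,u_1 = 0$, factors: since $p_1>0$, it forces the dichotomy $u_1=0$ or $u_3 = \frac{2a_1-1}{k}$. This splits the analysis into two branches, and in either branch it pins the nonlinear feedback term to an explicit constant, thereby linearising the two remaining equations. The whole proof is then a disciplined case analysis in which, once $u_3$ has been frozen, everything reduces to solving a linear system in $(u_1,u_2)$.

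First I would treat the branch $u_1=0$. Here the steady-state form of \eqref{M2} again factors, $\left(2\,\frac{a_2}{1+k\,u_3}-1\right)p_2\,u_2=0$, giving the sub-cases $u_2=0$ and $u_3=\frac{2a_2-1}{k}$. The sub-case $u_2=0$ collapses \eqref{M3} to $-d_3\,u_3 = 0$ and yields $E_0$, which exists unconditionally. In the sub-case $u_3=\frac{2a_2-1}{k}$ one substitutes $1+k\,u_3 = 2a_2$, so the feedback term in \eqref{M3} equals $\tfrac12$; \eqref{M3} then reads $p_2\,u_2 = d_3\,u_3$ and determines $u_2$ uniquely, producing $E_1$. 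In the second main branch $u_3=\frac{2a_1-1}{k}$ with $u_1\neq 0$, substituting $1+k\,u_3 = 2a_1$ turns the steady-state equations \eqref{M2} and \eqref{M3} into a linear system in $(u_1,u_2)$: solving \eqref{M2} for $u_1$ in terms of $u_2$ via the factor $1-\frac{a_2}{a_1}$, and \eqref{M3} for $u_2$ in terms of $u_3$ via the factor $2-\frac{a_2}{a_1}$, reproduces exactly the stated $E_2$.

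What remains is to read off the exact existence conditions, i.e.\ to characterise when each candidate has the required sign pattern, and this bookkeeping is where the only real care is needed. For $E_1$, strict positivity of $\bar{u}_3^1=\frac{2a_2-1}{k}$ forces $a_2>\tfrac12$, and this single inequality simultaneously makes $\bar{u}_2^1>0$. For $E_2$, positivity of $\bar{u}_3^2=\frac{2a_1-1}{k}$ forces $a_1>\tfrac12$, positivity of $\bar{u}_1^2$ through the factor $1-\frac{a_2}{a_1}$ forces $a_2<a_1$, while $\bar{u}_2^2>0$ is automatic since $2-\frac{a_2}{a_1}>0$ whenever $a_2<a_1$. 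Conversely one must check that these inequalities are also sufficient and that no further equilibria are hidden; this follows because the dichotomy from \eqref{M1} is exhaustive and, once $u_3$ is fixed in each branch, the remaining equations are linear and hence uniquely solvable. I expect the main obstacle to be purely organisational — keeping the case split exhaustive and matching each sign condition to the correct component — rather than any analytic difficulty, since every step reduces to solving a linear equation after the feedback nonlinearity has been frozen.
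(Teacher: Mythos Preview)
Your argument is correct: the factorisation of the steady-state form of \eqref{M1} into $u_1=0$ versus $u_3=(2a_1-1)/k$ is the natural entry point, and once $u_3$ is pinned the remaining two equations are indeed linear in $(u_1,u_2)$, so the case analysis you outline recovers $E_0$, $E_1$, $E_2$ together with the stated positivity conditions. One small point worth making explicit in the write-up is the uniqueness within each branch, which you mention only in passing: in the branch $u_1\neq 0$, the linear system for $(u_1,u_2)$ is nondegenerate precisely when $a_2\neq a_1$, so under the hypothesis $a_2<a_1$ the positive equilibrium is unique.

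As for the comparison: the paper does not actually prove Proposition~\ref{Prop1}. It states the result and attributes it to ref.~\cite{MCM}, where the steady states of the general $n$-compartment model are classified. Your self-contained computation therefore goes beyond what the paper itself provides, and is exactly the kind of direct verification one would expect for the three-compartment case.
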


Let us remark that for appropriate values of $d_3$, $p_2, \text{ and } k$ the steady state $E_2$ can be any point in $\mathbb{R}^3_+$. We have $E_2=(\bar{u}_1, \bar{u}_2, \bar{u}_3)$ if 
\begin{align*}
k = \frac{2\,a_1-1}{\bar{u}_3}, \quad d_3 = \frac{2-\frac{a_2}{a_1}}{1 - \frac{a_2}{a_1}} \, \frac{\bar{u}_1}{\bar{u}_3}p_1, \quad	p_2 = \frac{d_3}{2 - \frac{a_2}{a_1}} \, \frac{\bar{u}_3}{\bar{u}_2}.
\end{align*}


\subsection{Linear asymptotic stability}\label{sub:stability}
In  Proposition \ref{prop:stability} we summarize the linear asymptotic stability of the steady states $E_0$ and $E_1$. In Theorem \ref{thm:stabE2} we study  the linear asymptotic stability of $E_2$ using the Routh-Hurwitz Criterion.\\

\begin{proposition}\label{prop:stability}
	\begin{enumerate}[(i)]
		\item 	The steady state $E_0=\left(0,0,0\right)^{\mathrm T}$ is \las if $\operatorname{max}\lbrace a_1, a_2 \rbrace < \frac{1}{2}$ and unstable if $\operatorname{max}\lbrace a_1, a_2 \rbrace > \frac{1}{2}$.
		\item 	The steady state $E_1=\left(0,\frac{d_3}{p_2}\,\frac{2\,a_2-1}{k},\frac{2\,a_2-1}{k}\right)^{\mathrm T}$ is \las if $a_1<a_2$ and unstable if $a_1>a_2$.
	\end{enumerate}
\end{proposition}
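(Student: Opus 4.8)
The plan is to apply the principle of linearized stability: linearize the rescaled system \eqref{M1*}--\eqref{M3*} (in which the $u_1$-equation has unit coefficient, so $p_1$ plays no role in the sign analysis) about each steady state and determine the signs of the real parts of the eigenvalues of the Jacobian. First I would write down the general Jacobian $J(u_1,u_2,u_3)$. The key structural observation is that every entry of the form $\partial_{u_3}f_i$ carries a factor $u_1$ or $u_2$, so all such entries vanish at any equilibrium with a zero component; this collapses the third column at both $E_0$ and $E_1$ and is what makes the analysis tractable.

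For part (i), evaluating at $E_0=(0,0,0)$ gives $1+k\bar u_3^0=1$, and since $\bar u_1^0=\bar u_2^0=0$ the third column disappears, leaving the lower-triangular matrix
\[
J(E_0)=\begin{pmatrix} 2a_1-1 & 0 & 0 \\ 2(1-a_1) & (2a_2-1)\,p_2 & 0 \\ 0 & 2(1-a_2)\,p_2 & -d_3 \end{pmatrix}.
\]
Its eigenvalues are exactly the diagonal entries $2a_1-1$, $(2a_2-1)p_2$ and $-d_3<0$. As $p_2,d_3>0$, all three are negative precisely when $a_1<\tfrac12$ and $a_2<\tfrac12$, i.e.\ $\max\{a_1,a_2\}<\tfrac12$, giving \las, whereas if $\max\{a_1,a_2\}>\tfrac12$ one of the first two eigenvalues is positive, giving instability. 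This settles (i) with no further work.

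For part (ii), I would evaluate at $E_1=(0,\bar u_2^1,\bar u_3^1)$ using the steady-state identities $1+k\bar u_3^1=2a_2$ and $\bar u_2^1=\tfrac{d_3}{p_2}\bar u_3^1$. Because $\bar u_1^1=0$, the first row of $J(E_1)$ has only its $(1,1)$-entry nonzero, so the eigenvalue $\lambda_1=\tfrac{a_1}{a_2}-1$ decouples; since $a_2>\tfrac12>0$ by the existence condition for $E_1$, one has $\operatorname{sign}(\lambda_1)=\operatorname{sign}(a_1-a_2)$. After simplification the remaining block on the $(u_2,u_3)$-plane is
\[
M=\begin{pmatrix} 0 & -d_3\,\tfrac{2a_2-1}{2a_2} \\ p_2 & -\tfrac{d_3}{2a_2} \end{pmatrix},
\qquad \operatorname{tr} M=-\tfrac{d_3}{2a_2}<0,\quad \det M=p_2 d_3\,\tfrac{2a_2-1}{2a_2}>0,
\]
where positivity of $\det M$ is exactly where the hypothesis $a_2>\tfrac12$ enters. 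By the trace--determinant test both eigenvalues of $M$ have negative real part unconditionally, so the stability of $E_1$ is governed entirely by $\lambda_1$: it is \las when $a_1<a_2$ and unstable when $a_1>a_2$.

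The computations are essentially routine linear algebra; the only real care is needed in part (ii), in reducing the Jacobian with the steady-state relations, recognizing the block-triangular structure that isolates $\lambda_1$, and observing that the $2\times2$ block is always stable so that the sign of $a_1-a_2$ is decisive. The degenerate thresholds $\max\{a_1,a_2\}=\tfrac12$ in (i) and $a_1=a_2$ in (ii) fall outside the linearized test, as expected.
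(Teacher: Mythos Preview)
Your proof is correct and follows essentially the same route as the paper: compute the Jacobian at each steady state, read off the eigenvalues directly for $E_0$ from the lower-triangular structure, and for $E_1$ factor off $\lambda_1=\tfrac{a_1}{a_2}-1$ and show the remaining $2\times 2$ block is stable. The only difference is cosmetic: the paper solves the quadratic for $\lambda_2,\lambda_3$ explicitly, whereas you invoke the trace--determinant test, which is cleaner.

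One small inaccuracy in your preamble: it is not true that the third column ``collapses'' at $E_1$ (nor even fully at $E_0$, since $\partial_{u_3}f_3$ has the $-d_3$ term), and you do not actually use this claim---your block $M$ correctly retains the nonzero $(2,3)$ and $(3,3)$ entries. I would simply drop that sentence.
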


\begin{proof}  
	The Jacobian of system  \eqref{M1*}-\eqref{M3*} at the steady state $\left(\bar{u}_1^i, \bar{u}_2^i, \bar{u}_3^i\right)$is given by
	
	\[\begin{split} &J\left(\bar{u}_1^i, \bar{u}_2^i, \bar{u}_3^i\right) = \\
	&\quad \left(
	\begin{matrix}
	2\,\frac{a_1}{1+k\,\bar{u}_3^i}-1&
	0&
	-2\,a_1\,k\,\frac{1}{\left(1+k\,\bar{u}_3^i\right)^2}\,\bar{u}_1^i \cr
	2\,\left(1-\frac{a_1}{1+k\,\bar{u}_3^i}\right)&
	\left(2\,\frac{a_2}{1+k\,\bar{u}_3^i}-1\right)\,p_2&
	-2\,p_2\,a_2\,k\,\frac{1}{\left(1+k\,\bar{u}_3^i\right)^2}\,\bar{u}_2^i+ 2\,a_1\,k\,\frac{1}{\left(1+k\,\bar{u}_3^i\right)^2}\,\bar{u}_1^i \cr
	0&
	2\,\left(1-\frac{a_2}{1+k\,\bar{u}_3^i}\right)\,p_2&
	2\,p_2\,a_2\,k\,\frac{1}{\left(1+k\,\bar{u}_3^i\right)^2}\,\bar{u}_2^i-d_3
	\end{matrix}
	\right).
	\end{split}\]
	$ $\\
	(i) Consider the Jacobian matrix at $E_0$
	\[J\left(0,0,0\right) =
	\left(
	\begin{matrix}
	2\,a_1-1&
	0&
	0\cr
	2\,\left(1-a_1\right)&
	\left(2\,a_2-1\right)\,p_2&
	0 \cr
	0&
	2\,\left(1-a_2\right)\,p_2&
	-d_3
	\end{matrix}
	\right).
	\]
	As $J\left(0,0,0\right)$ is a lower triangular matrix, we obtain the eigenvalues
	\begin{align*}
	\lambda_1^0 &= 2\,a_1-1 \\
	\lambda_2^0 &= \left(2\,a_2-1\right)\,p_2 \\
	\lambda_3^0 &= -d_3,
	\end{align*}
	which implies (i).\\
	
	(ii) Consider the Jacobian matrix at $E_1$
	\[J\left(\bar{u}_1^1,\bar{u}_2^1,\bar{u}_3^1\right)=
	\left(
	\begin{matrix}
	\frac{a_1}{a_2}-1&
	0&
	0 \cr
	2-\frac{a_1}{a_2}&
	0&
	-\left(1-\frac{1}{2\,a_2}\right)\,d_3\cr
	0&
	p_2&
	-\frac{1}{2\,a_2}\,d_3
	\end{matrix}
	\right).
	\]
	We recall that for existence of $E_1$ it has to hold $a_2>1/2$. We obtain the characteristic equation
	\[\left(\lambda - \frac{a_1}{a_2} +1\right) \, \left[\lambda \, \left(\lambda + \frac{1}{2\,a_2}\,d_3 \right)+ \left(1-\frac{1}{2\,a_2}\right)\,d_3 \, p_2\right] =0
	\]
	and thus the eigenvalues
	\begin{align*}
	\lambda_1^1 &= \frac{a_1}{a_2} -1 \\
	\lambda_2^1 &= -\frac{1}{4\,a_2}\,d_3 + \sqrt{\left(\frac{1}{4\,a_2}\,d_3 \right)^2-\left(1-\frac{1}{2\,a_2}\right)\,d_3 \, p_2}\\
	\lambda_3^1 &= -\frac{1}{4\,a_2}\,d_3 - \underbrace{\sqrt{\left(\frac{1}{4\,a_2}\,d_3 \right)^2-\left(1-\frac{1}{2\,a_2}\right)\,d_3 \, p_2}}_{<\frac{1}{4\,a_2}\,d_3 \text{ or imaginary, since $a_2>0.5$}}
	\end{align*}
	As $\lambda_2^1$ and $\lambda_3^1$ have always negative real parts, the only condition that needs to hold for $E_1$ to be \las is $a_1<a_2$. If $a_1>a_2$, $E_1$ is unstable. 
	
\end{proof}



For $E_2$, we will show local asymptotic stability using the Routh-Hurwitz-Criterion.

{Using the expression for $E_2$ from Proposition \ref{Prop1}} we can prove the following

\begin{theorem}\label{thm:stabE2}
	The positive steady state $E_2$ of system \eqref{M1} - \eqref{M3} is \las if 
	\begin{align}
	p_2> \frac{1}{1-\frac{a_2}{a_1}} \, \left[\frac{1}{\mygamma} - \mybeta \, \frac{d_3}{p_1} \right]p_1. \label{stabcrit}
	\end{align}
	$E_2$ unstable if
	\begin{align}
	p_2< \frac{1}{1-\frac{a_2}{a_1}} \, \left[\frac{1}{\mygamma} - \mybeta \, \frac{d_3}{p_1} \right]p_1, \label{instabcrit}
	\end{align}
	where
	\begin{align*}
	\mybeta &= 1-\frac{a_2}{a_1}\,\left(1-\frac{1}{2\,a_1}\right)\,\frac{1}{2-\frac{a_2}{a_1}}\\
	\mygamma &= \frac{1}{2\,a_1}\frac{1}{1-\frac{1}{2\,a_1}} + 	\frac{a_2}{a_1}\,\frac{1}{\left(2-\frac{a_2}{a_1}\right)\,\left(1-\frac{a_2}{a_1}\right)}.
	\end{align*}
\end{theorem}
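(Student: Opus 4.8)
The plan is to run the Routh--Hurwitz test on the characteristic polynomial of the Jacobian at $E_2$. Working in the rescaled system \eqref{M1*}--\eqref{M3*} (so that $p_1=1$ throughout the computation, the factors of $p_1$ in \eqref{stabcrit} being restored at the very end by undoing the reparametrization), I would first substitute $\bar u_3^2=\frac{2a_1-1}{k}$ into the Jacobian displayed in the proof of Proposition \ref{prop:stability}. Since this makes $1+k\bar u_3^2=2a_1$, the matrix simplifies sharply: the $(1,1)$-entry vanishes, the $(2,2)$-entry becomes $\left(\frac{a_2}{a_1}-1\right)p_2$, and every factor $\left(1+k\bar u_3^2\right)^{-2}$ becomes $\frac{1}{4a_1^2}$. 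With $J_{11}=J_{12}=J_{31}=0$ the matrix is very sparse, and in particular $\det J$ reduces to a single $2\times2$ cofactor.

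Next I would assemble the characteristic polynomial $\lambda^3+c_1\lambda^2+c_2\lambda+c_3$ with $c_1=-\operatorname{tr}J$, $c_2$ the sum of the principal $2\times2$ minors and $c_3=-\det J$. The key simplification I expect is that, once the steady-state values $\bar u_1^2,\bar u_2^2$ from Proposition \ref{Prop1} are inserted, the two feedback-derivative entries $\frac{k}{2a_1}\bar u_1^2$ and $\frac{p_2a_2k}{2a_1^2}\bar u_2^2$ become \emph{independent of} $p_2$, because $\bar u_2^2\propto 1/p_2$. The first two Routh--Hurwitz conditions should then come essentially for free: $c_3=-\det J>0$ because $\det J$ is the product of a negative off-diagonal entry with the positive quantity $\left(2-\frac{a_2}{a_1}\right)p_2$, and $c_1$ collapses to the form $\left(1-\frac{a_2}{a_1}\right)p_2+\mybeta\,d_3$, which is positive since $\mybeta>0$ is elementary to check under the existence conditions $a_1>\tfrac12$, $a_2<a_1$.

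With $c_1>0$ and $c_3>0$ automatic, the cubic Routh--Hurwitz criterion reduces stability of $E_2$ to the single inequality $c_1c_2-c_3>0$; when it is reversed, the Routh array exhibits two sign changes, giving two eigenvalues with positive real part and hence \eqref{instabcrit}, while equality marks the candidate Hopf locus studied later. The central computation is therefore to show that $c_1c_2-c_3$, after factoring out $p_2>0$, is affine in $p_2$ with strictly positive leading coefficient; solving $c_1c_2-c_3>0$ for $p_2$ yields a single threshold, which I would then identify with \eqref{stabcrit}. Matching reduces to two algebraic identities: the coefficient of $d_3$ collapses to $\mybeta$, and the reciprocal of the constant term equals $\mygamma$, the latter relying on the factorization $r^2-3r+2=(1-r)(2-r)$ with $r:=\frac{a_2}{a_1}$.

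The main obstacle will be twofold bookkeeping. First, one must pin down the sign of the leading coefficient of $(c_1c_2-c_3)/p_2$ in $p_2$; it carries a factor $r^2-4r+2$ that changes sign at $r=2-\sqrt2$, so positivity for all admissible $(a_1,a_2)$ is not self-evident, and I would establish it by using $\frac{2a_1-1}{2a_1}<1$ to bound the negative contribution and reduce to the clean positive quantity $\frac{r}{2-r}$. Second, the reconciliation with the closed forms $\mybeta$ and $\mygamma$ is mechanical but error-prone; I would tame it by introducing $r=\frac{a_2}{a_1}$ and the common factor $\Delta:=\frac{(2a_1-1)d_3}{2a_1(2-r)}$, so that the two $p_2$-independent feedback terms become $r\Delta$ and $(1-r)\Delta$, after which both identities fall out by direct simplification.
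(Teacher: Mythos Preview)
Your proposal is correct and follows the same Routh--Hurwitz route as the paper: compute the Jacobian at $E_2$ in the rescaled system, write the characteristic polynomial $\lambda^3+b_1\lambda^2+b_2\lambda+b_3$, verify that $b_1>0$ and $b_3>0$ are automatic under the existence conditions $a_1>\tfrac12$, $a_2<a_1$, and reduce everything to the sign of $b_1b_2-b_3$.

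The only organizational difference is in how the decisive quantity $b_1b_2-b_3$ is handled. The paper factors it directly (Appendix~\ref{app:proofsstab}) into
\[
\left(1-\tfrac{1}{2a_1}\right)\left(1-\tfrac{a_2}{a_1}\right)\Big(\big[(1-\tfrac{a_2}{a_1})p_2+\mybeta\,d_3\big]\,\mygamma-1\Big)\,d_3\,p_2,
\]
from which both the threshold and the positivity of the leading $p_2$-coefficient are read off at once (the latter being $(1-\tfrac{1}{2a_1})(1-\tfrac{a_2}{a_1})^2\,\mygamma\,d_3>0$, since $\mygamma$ is manifestly a sum of two positive terms). You instead plan to argue the leading-coefficient sign first and then solve for the threshold; that works, but note a small imprecision: in your intermediate form the expression $r^2-4r+2$ is not a \emph{factor} of the leading coefficient but appears inside $(1-r)-\tfrac{2a_1-1}{2a_1}\cdot\tfrac{r^2-4r+2}{2-r}$, and your bounding argument via $\tfrac{2a_1-1}{2a_1}<1$ indeed reduces this to $\tfrac{r}{2-r}>0$. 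The paper's direct factorization sidesteps this case split entirely, which is the main practical advantage of its ordering.
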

\begin{proof}
	We perform calculations for the transformed system \eqref{M1*}-\eqref{M3*}.
	
	Consider the Jacobian matrix at $E_2$, i.e.\
	\begin{align*}
	&J\left(\bar{u}_1^2,\bar{u}_2^2,\bar{u}_3^2\right)\\
	\\ \displaybreak[0]
	=& \left(
	\begin{matrix}
	0&
	0&
	-\left(1-\frac{1}{2\,a_1}\right)\,\left(1-\frac{a_2}{a_1}\right)\,\frac{1}{2-\frac{a_2}{a_1}}\,d_3 \cr
	1&
	-\left(1-\frac{a_2}{a_1}\right)\,p_2&
	\left(1-\frac{1}{2\,a_1}\right)\,\left(1-2\,\frac{a_2}{a_1}\right)\,\frac{1}{2-\frac{a_2}{a_1}}\,d_3 \cr
	0&
	\left(2-\frac{a_2}{a_1}\right)\,p_2&
	\left(\frac{a_2}{a_1}\,\left(1-\frac{1}{2\,a_1}\right)\,\frac{1}{2-\frac{a_2}{a_1}}-1\right)\,d_3
	\end{matrix}
	\right).
	\end{align*}
	We obtain the characteristic equation
	\begin{align*}
	0= & \quad \lambda^3 + \underbrace{\left[\left(1-\frac{a_2}{a_1}\right)\,p_2+\left(1-\frac{a_2}{a_1}\,\left(1-\frac{1}{2\,a_1}\right)\,\frac{1}{2-\frac{a_2}{a_1}}\right)\,d_3\right]}_{=:b_1}\,\lambda^2 \\
	&+\underbrace{\left[\left(1-\frac{a_2}{a_1}\right)\,\left(1-\frac{a_2}{a_1}\,\left(1-\frac{1}{2\,a_1}\right)\,\frac{1}{2-\frac{a_2}{a_1}}\right) - \left(1-\frac{1}{2\,a_1}\right)\,\left(1-2\,\frac{a_2}{a_1}\right)\right]\,d_3\,p_2}_{=:b_2}\,\lambda\\ 		&+\underbrace{\left(1-\frac{1}{2\,a_1}\right)\,\left(1-\frac{a_2}{a_1}\right)\,d_3\,p_2}_{=:b_3}.
	\end{align*}

	We observe that under positivity conditions for $E_2$ ($a_1>a_2$ and $a_1>\frac{1}{2}$) the relations $b_1>0$ and $b_3>0$ hold true: $b_3$ is a product with positive factors only and therefore positive itself. The expression $b_1$ can be written as $b_1 = \left(1-\frac{a_2}{a_1}\right)\, p_2 + \left(1-P\right)\,d_3$ where $P$ is a product consisting of factors that are in $(0,1)$ under positivity conditions. Thus, $1-P$ is positive and, therefore, $b_1$ is, as a sum of two positive summands, positive as well.\\
	
	We distinguish between the following parameter configurations. Details can be found in the book by Gantmacher \cite{Gantm}.
	
	\begin{align*}
	b_1\,b_2-b_3 >0 \, \Leftrightarrow &\text{ $\lambda_1, \lambda_2, \lambda_3$ have negative real parts.}\\
	b_1\,b_2-b_3 =0 \, \Leftrightarrow &\text{ There is one eigenvalue with negative real part and a couple of}\\
	&\text{ complex conjugated eigenvalues with zero real parts.}\\
	b_1\,b_2-b_3 <0 \, \Leftrightarrow &\text{ There is one eigenvalue with negative and two with positive  real parts.}\\
	\end{align*}

	It remains to determine conditions so that the relations $b_1\,b_2 - b_3 > 0$ and $b_1\,b_2 - b_3 < 0$ respectively are satisfied, to complete the proof. Using $\mybeta$ and $\mygamma$ as defined in the theorem, we can rearrange $b_1\,b_2 - b_3$ as follows. Further details on how to proceed can be found in the appendix \ref{app:proofsstab}.
	\begin{align}
	\label{Hurwitz}
	& \, b_1\,b_2 - b_3\\\notag
	&= \left(1-\frac{1}{2\,a_1}\right)\,\left(1-\frac{a_2}{a_1}\right)\,\left(\left[\left(1-\frac{a_2}{a_1}\right)\,p_2 +  		\mybeta \,d_3\right] \cdot \mygamma -1\right)\,d_3\,p_2.
	\end{align}
	
	As the factors $\left(1-\frac{1}{2\,a_1}\right)$ and $\left(1-\frac{a_2}{a_1}\right)$ are positive under positivity conditions for $E_2$ and $d_3$ and $p_2$ are positive anyway, it suffices to find conditions which ensure that the remaining factor is of positive sign. We find
	\begin{align}\
	\notag 0&<\left[\left(1-\frac{a_2}{a_1}\right)\,p_2 + \mybeta\,d_3\right] \cdot \mygamma -1 \\ 
	\Leftrightarrow \quad  p_2&> \frac{1}{1-\frac{a_2}{a_1}} \, \left[\frac{1}{\mygamma} - \mybeta \, d_3 \right] \label{stable}
	\end{align}
	Thus, the eigenvalues of the Jacobian matrix at $E_2$ have negative real parts \tiff relation (\ref{stable}) holds. This implies local asymptotic stability. In analogy there exist eigenvalues with positive real parts \tiff $b_1\,b_2 - b_3<0$. Transforming back to the parameters of system \eqref{M1}-\eqref{M3} completes the proof of this theorem.
\end{proof}
Finally, we will see that the parameter range where $E_2$ exists and is unstable is bounded:

\begin{proposition}
	For $p_1=1$ the set 
	\[A=\left\{ \left(a_1,a_2,d_3,p_2\right) \in (0,1)^2\times\mathbb{R}^2_+\mid a_1>\frac{1}{2}, a_1>a_2,  E_2\text{ is unstable}\right\},\]
	i.e.\ the parameter range where $E_2$ exists, is positive and is unstable, is a bounded subset of the parameter space.
\end{proposition}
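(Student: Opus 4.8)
The plan is to use the explicit instability criterion of Theorem~\ref{thm:stabE2} to confine each of the four coordinates to a bounded interval. Two of them are free of charge: since the definition of $A$ already requires $a_1,a_2\in(0,1)$, these coordinates lie in a bounded set. It therefore remains to bound the positive quantities $d_3$ and $p_2$ from above, and the key is that the factors $\mybeta$ and $\mygamma$ entering \eqref{instabcrit} can be bounded away from degeneracy uniformly over the admissible region $a_1>\tfrac12$, $a_2<a_1$.

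First I would put $p_1=1$ and introduce the ratio $r:=a_2/a_1\in(0,1)$, which together with $a_1\in(\tfrac12,1)$ parametrizes the region. Using $1-\tfrac{1}{2a_1}=\tfrac{2a_1-1}{2a_1}$, the two auxiliary expressions simplify to $\gamma=\frac{1}{2a_1-1}+\frac{r}{(2-r)(1-r)}$ and $\beta=\frac{4a_1(1-r)+r}{2a_1(2-r)}$. Since $a_1<1$ gives $2a_1-1<1$ and hence $\frac{1}{2a_1-1}>1$, while the second summand is positive, one reads off $\gamma>1$. For $\beta$ I would estimate numerator and denominator separately: $a_1>\tfrac12$ yields $4a_1(1-r)>2(1-r)$, so the numerator exceeds $2-r$, whereas $a_1<1$ gives a denominator below $2(2-r)$, leading to the clean bound $\beta>\frac{2-r}{2(2-r)}=\frac12$.

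Next I would exploit that membership in $A$ requires $E_2$ to be unstable, i.e.\ \eqref{instabcrit} must hold for some strictly positive $p_2$. This forces the bracket to be positive, $\frac{1}{\gamma}-\beta\,d_3>0$, whence $d_3<\frac{1}{\beta\gamma}<\frac{1}{(1/2)\cdot 1}=2$. For $p_2$, discarding the nonnegative term $\beta\,d_3$ in \eqref{instabcrit} gives $p_2<\frac{1}{(1-r)\gamma}$, and a lower bound on $(1-r)\gamma$ follows from the same two elementary estimates: $(1-r)\gamma=\frac{1-r}{2a_1-1}+\frac{r}{2-r}>(1-r)+\frac{r}{2}=1-\frac{r}{2}>\frac12$. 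Hence $p_2<2$ as well, and altogether $A\subseteq(0,1)^2\times(0,2)^2$, which is bounded.

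I expect the only genuine obstacle to be the uniform lower bound $\beta>\frac12$: the expression for $\beta$ mixes $a_1$ and $r$ in a way that is not obviously bounded away from zero, and a direct minimization over the rectangle (say via calculus) is messier than necessary. The trick is the substitution $r=a_2/a_1$ followed by estimating numerator and denominator independently, so that the explicit $a_1$-dependence cancels and one is left with the transparent ratio $\frac{2-r}{2(2-r)}$. Once $\beta>\frac12$ and $\gamma>1$ are established, the boundedness of both $d_3$ and $p_2$ is immediate, and the conclusion follows.
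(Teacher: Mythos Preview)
Your proof is correct and follows essentially the same route as the paper: both reduce the question to showing $(1-r)\gamma>\tfrac12$ and $\beta\gamma>\tfrac12$ uniformly over the admissible $(a_1,a_2)$, which yields $p_2<2$ and $d_3<2$ from the instability criterion. The only cosmetic difference is that the paper obtains $\beta\gamma>\tfrac12$ via the intermediate inequality $\beta>1-r$ (so that $\beta\gamma>(1-r)\gamma>\tfrac12$), whereas you bound $\gamma>1$ and $\beta>\tfrac12$ separately; both lead to the same conclusion $A\subseteq(0,1)^2\times(0,2)^2$.
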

\begin{proof}
	First, we note that for all $\left(a_1,a_2,d_3,p_2\right) \in A$ it holds that
	\[p_2 \leq \frac{1}{1-\frac{a_2}{a_1}} \, \left[\frac{1}{\mygamma} - \mybeta \, d_3 \right]\]
	and thus
	\[A \subseteq \bigcup_{a_1 \in (\frac{1}{2},1),a_2 \in (0,a_1)} \lbrace a_1\rbrace \times \lbrace a_2 \rbrace \times A_{a_1,a_2},\]
	where
	\[A_{a_1,a_2} = \lbrace \left(d_3,p_2 \right) \in \mathbb{R}^2_+ \mid p_2 \leq \frac{1}{1-\frac{a_2}{a_1}} \, \left[\frac{1}{\mygamma} - \mybeta \, d_3 \right]\rbrace.\]
	As the range of $a_1$ and $a_2$ is bounded anyway, it suffices to show that the sets $A_{a_1,a_2}$ are uniformly bounded. To this end, we observe that for fixed $a_1$ and $a_2$ the boundary of each $A_{a_1,a_2}$ consists of the part of the graph of the linear equation
	\[p_2 = \frac{1}{1-\frac{a_2}{a_1}} \, \left[\frac{1}{\mygamma} - \mybeta \, d_3 \right]\]
	lying in the first quadrant and its axis intercepts (see Figure \ref{A_zweidim}).
	\begin{figure}[!hbt]
		\includegraphics[width=\columnwidth]{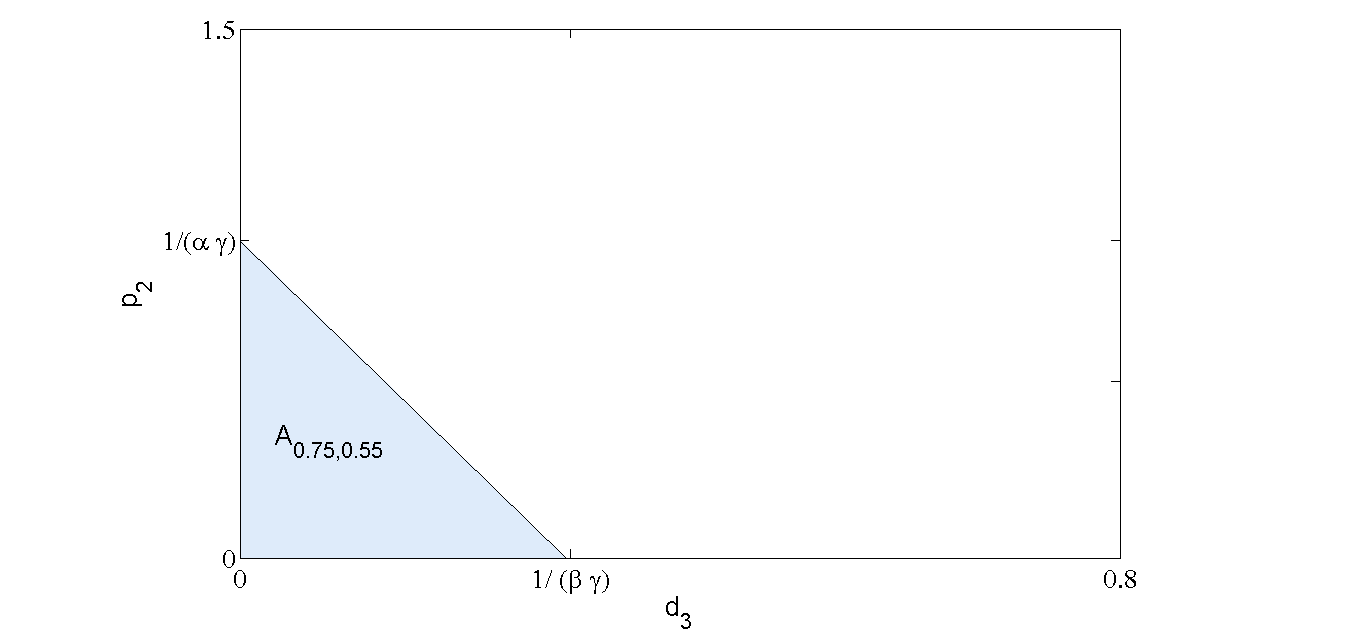}
		\caption[The set $A_{a_1,a_2}$ for $a_1=0.75$ and $a_2=0.55$.]{The set $A_{a_1,a_2}$ for $a_1=0.75$ and $a_2=0.55$. $\alpha = 1 - \frac{a_2}{a_1}$}
		\label{A_zweidim}
	\end{figure}
	We will show that the intercepts of the $d_3$- and the $p_2$-axis $\frac{1}{\mybeta \, \mygamma}$ and $\frac{1}{(1-\frac{a_2}{a_1})\cdot\mygamma}$ respectively are uniformly bounded. For $p_2 \in A_{a_1,a_2}$ for all $a_1, a_2 \in (0,1)$ it holds:
	\begin{multline*}
	\frac{1}{p_2}\geq\mygamma\,\left(1-\frac{a_2}{a_1}\right) >\frac{1}{2}\, \left[\frac{1-\frac{a_2}{a_1}}{1-\frac{1}{2\,a_1}} + \frac{a_2}{a_1}\right]= \frac{1}{2} \, \left[1+\frac{1}{2\,a_1}\,\frac{1-\frac{a_2}{a_1}}{1-\frac{1}{2\,a_1}}\right]
	> \frac{1}{2}.
	\end{multline*}
	This is equivalent to $p_2<2$ for all $a_1,a_2 \in (0,1)$. Furthermore, we note that
	\begin{align*}
	\mygamma \, \mybeta >\mygamma \, \left(1-\frac{a_2}{a_1}\right) > \frac{1}{2}
	\end{align*}
	and thus, we also have for all $d_3 \in A_{a_1,a_2}$ $d_3\leq \frac{1}{\mybeta \, \mygamma}<2 \ \forall a_1,a_2 \in (0,1)$. This means that the set $A_{a_1,a_2}$ are uniformly bounded and therefore, the set A consisting of all parameters for which $E_2$ is unstable is bounded.
\end{proof}

Table \ref{tabelle} summarizes existence and local asymptotic stability of the equilibria $E_0$ to $E_2$.

\begin{table}[!htb]
	\centering
	\begin{tabular}{| m{3cm}| m{2.5cm} | m{2.5cm}|}
		\hline
		\multicolumn{1}{|r||}{} & \multicolumn{1}{c|}{$a_1>a_2$} & \multicolumn{1}{c|}{$a_1<a_2$} \\
		\hline \hline
		\multicolumn{1}{|r||}{$a_1<\frac{1}{2}, a_2<\frac{1}{2}$} & $E_0$: stable\newline
		$E_1: \nexists$ \newline
		$E_2: \nexists$ & $E_0$: stable\newline
		$E_1: \nexists$ \newline
		$E_2: \nexists$\\
		\hline
		\multicolumn{1}{|r||}{$a_1<\frac{1}{2}, a_2>\frac{1}{2}$} & &$E_0$: unstable \newline $E_1$: stable\newline $E_2: \nexists$\\
		\hline
		\multicolumn{1}{|r||}{$a_1>\frac{1}{2}, a_2<\frac{1}{2}$} & $E_0$: unstable \newline $E_1: \nexists$ \newline$E_2$: exists &  \\
		\hline
		\multicolumn{1}{|r||}{$a_1>\frac{1}{2}, a_2>\frac{1}{2}$} & $E_0$: unstable \newline $E_1$: unstable \newline $E_2$: exists & $E_0$: unstable \newline $E_1$: stable \newline $E_2: \nexists$\\
		\hline
	\end{tabular}
	\caption[Summary of existence and stability conditions for the steady states depending on the parameter values of $a_1$ and $a_2$.]{Summary of existence and stability conditions for the steady states depending on the parameter values of $a_1$ and $a_2$. "stable" refers to local asymptotic stability.}
	\label{tabelle}
\end{table}

\begin{corollary}
	{The steady state $E_2$ exists and is locally asymptotically stable if and only if }	
	\begin{align*}
	a_1&>\frac{1}{2}\\
	a_1&>a_2\\
	p_2&>\frac{1}{1-\frac{a_2}{a_1}} \, \left[\frac{1}{\mygamma} - \mybeta \, \frac{d_3}{p_1} \right]p_1.
	\end{align*}
	In this case all other non-negative steady states are unstable.  
\end{corollary}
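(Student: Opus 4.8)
The plan is to assemble the corollary by combining the existence criterion of Proposition~\ref{Prop1} with the stability criterion of Theorem~\ref{thm:stabE2}, and then to dispatch the instability of the remaining equilibria using Proposition~\ref{prop:stability}. Since every ingredient has already been established, the argument is one of synthesis rather than fresh analysis.

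First I would establish the claimed equivalence for $E_2$. By Proposition~\ref{Prop1}(3), the positive steady state $E_2$ exists precisely when $a_1 > \frac{1}{2}$ and $a_2 < a_1$. Assuming these two conditions, Theorem~\ref{thm:stabE2} states that $E_2$ is \las \tiff
\begin{align*}
p_2 > \frac{1}{1-\frac{a_2}{a_1}}\,\left[\frac{1}{\mygamma} - \mybeta\,\frac{d_3}{p_1}\right]p_1.
\end{align*}
Chaining these, $E_2$ exists and is \las exactly when all three displayed inequalities hold simultaneously, which is the asserted biconditional.

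Next I would verify that, under these three conditions, every other non-negative steady state is unstable. The trivial equilibrium $E_0$ always exists, and since $a_1 > \frac{1}{2}$ we have $\max\{a_1,a_2\} \geq a_1 > \frac{1}{2}$, so Proposition~\ref{prop:stability}(i) yields instability of $E_0$. For the semi-trivial equilibrium $E_1$, one recalls that it exists only when $a_2 > \frac{1}{2}$; whenever it does exist, the hypothesis $a_1 > a_2$ together with Proposition~\ref{prop:stability}(ii) forces $E_1$ to be unstable. Since $E_0$, $E_1$, $E_2$ exhaust the non-negative equilibria enumerated in Proposition~\ref{Prop1}, this completes the proof.

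As the statement is a direct corollary of results already in hand, I do not expect a genuine obstacle; the only points requiring care are bookkeeping in nature. One must ensure that the $p_2$-inequality from Theorem~\ref{thm:stabE2} is invoked only in the regime where the existence hypotheses $a_1>\frac{1}{2}$, $a_2<a_1$ are met, so that the stability criterion is meaningful, and that the instability conditions for $E_0$ and $E_1$ are read off with the correct orientation of the inequalities. The mild subtlety that $E_1$ may fail to exist in the relevant parameter regime causes no difficulty, since a non-existent equilibrium trivially cannot violate the claim that \emph{all} other non-negative steady states are unstable.
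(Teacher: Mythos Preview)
Your proposal is correct and matches the paper's approach: the corollary is presented there without an explicit proof, immediately after Table~\ref{tabelle}, and is clearly intended as the synthesis of Proposition~\ref{Prop1}, Proposition~\ref{prop:stability}, and Theorem~\ref{thm:stabE2} that you describe. One minor remark: Theorem~\ref{thm:stabE2} literally gives two one-sided implications (stability for $p_2$ above the threshold, instability for $p_2$ below) rather than a single biconditional, so the borderline case $p_2=p_2^*$ is not covered by that theorem; the paper tacitly excludes this non-generic case, and your argument does the same.
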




\subsection{Hopf bifurcation}\label{sub:hopf}

In this section, we will further investigate the change of the dynamical behavior when $p_2$ passes through $\frac{1}{1-\frac{a_2}{a_1}} \, \left[\frac{1}{\mygamma} - \mybeta \, \frac{d_3}{p_1} \right]p_1$. We note that this is only possible if $\frac{1}{1-\frac{a_2}{a_1}} \, \left[\frac{1}{\mygamma} - \mybeta \, \frac{d_3}{p_1} \right]p_1>0$ holds, i.e. for $d_3 < d_3^\text{max} := \frac{p_1}{\mybeta \, \mygamma}$.

\begin{theorem}
	Let $d_3^\text{max} := \frac{p_1}{\mybeta \, \mygamma}$ and $d_3 < d_3^\text{max}$. Then the steady state $E_2$ undergoes a Hopf bifurcation with bifurcation point $p_2= p_2^* :=\frac{1}{1-\frac{a_2}{a_1}} \, \left[\frac{1}{\mygamma} - \mybeta \, \frac{d_3}{p_1} \right]p_1$, i.e.\ the Jacobian matrix $J$ at the positive steady state $E_2$  has two eigenvalues $\lambda_1, \lambda_2$ for which the following relations hold 
	\[\lambda_{1,2}(p_2)=\mu(p_2)\pm \omega(p_2), \quad \omega(p_2^*)\neq 0 ,\quad \mu(p_2^*)= 0, \quad \frac{d}{d\,p_2}\mu(p_2^*)\neq 0.\]
\end{theorem}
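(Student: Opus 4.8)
The plan is to work entirely with the characteristic polynomial $\lambda^3 + b_1\,\lambda^2 + b_2\,\lambda + b_3$ from the proof of Theorem~\ref{thm:stabE2}, now viewing $b_1,b_2,b_3$ as smooth (indeed rational) functions of the bifurcation parameter $p_2$ while all remaining parameters are held fixed; as before, the positivity conditions for $E_2$ guarantee $b_1>0$ and $b_3>0$ throughout. I would carry out the argument in the dimensionless system \eqref{M1*}-\eqref{M3*}, as in the proof of Theorem~\ref{thm:stabE2}, and transform back at the end. The first step is to exploit the algebraic identity that holds exactly at the critical value. By the definition of $p_2^*$ together with \eqref{Hurwitz} and \eqref{stable}, $p_2^*$ is precisely the value where $b_1\,b_2 - b_3 = 0$. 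When $b_1\,b_2 = b_3$ the cubic factors as
\[
\lambda^3 + b_1\,\lambda^2 + b_2\,\lambda + b_1\,b_2 = \left(\lambda + b_1\right)\left(\lambda^2 + b_2\right),
\]
so the spectrum at $p_2^*$ consists of the real eigenvalue $\lambda_3 = -b_1 < 0$ and a complex-conjugate pair $\lambda_{1,2} = \pm\sqrt{-b_2}$.

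To confirm that this pair is genuinely purely imaginary and nonzero, I note that at $p_2^*$ we have $b_2 = b_3/b_1 > 0$ because $b_1,b_3>0$. Hence $\lambda_{1,2}(p_2^*) = \pm i\sqrt{b_2(p_2^*)}$, giving $\mu(p_2^*)=0$ and $\omega(p_2^*)\neq 0$. Moreover $b_2>0$ makes $\pm i\sqrt{b_2}$ simple roots, distinct from the real root $-b_1$, so the implicit function theorem furnishes a smooth eigenvalue branch $\lambda(p_2)=\mu(p_2)\pm\omega(p_2)$ near $p_2^*$, which justifies the differentiation in the next step.

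The core of the proof is transversality. I would differentiate the characteristic equation implicitly in $p_2$ and solve for $\lambda'$,
\[
\lambda'(p_2) = -\frac{b_1'\,\lambda^2 + b_2'\,\lambda + b_3'}{3\,\lambda^2 + 2\,b_1\,\lambda + b_2},
\]
evaluate at $\lambda = i\omega_0$ with $\omega_0^2 = b_2(p_2^*)$, and take real parts. Multiplying by the conjugate of the denominator (whose squared modulus is $4\,b_2\left(b_1^2+b_2\right)$) and recognizing the real part of the numerator combination as $2\,b_2\,\frac{d}{dp_2}\!\left(b_1\,b_2-b_3\right)$, a short computation collapses to
\[
\mu'(p_2^*) = \operatorname{Re}\lambda'(p_2^*) = -\frac{\left.\tfrac{d}{dp_2}\!\left(b_1\,b_2 - b_3\right)\right|_{p_2^*}}{2\left(b_1^2 + b_2(p_2^*)\right)}.
\]
It then remains to evaluate the derivative of the Routh--Hurwitz expression. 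From \eqref{Hurwitz}, viewing the bracketed factor as linear in $p_2$, the quantity $b_1\,b_2 - b_3$ is, up to the positive prefactor $\left(1-\tfrac{1}{2\,a_1}\right)\left(1-\tfrac{a_2}{a_1}\right)d_3$, a quadratic in $p_2$ of the form $\left(1-\tfrac{a_2}{a_1}\right)\mygamma\,p_2^2 + \left(\mybeta\,\mygamma\,d_3 - 1\right)p_2$. Differentiating and substituting the bifurcation condition $\left(1-\tfrac{a_2}{a_1}\right)\mygamma\,p_2^* + \mybeta\,\mygamma\,d_3 - 1 = 0$ leaves a strictly positive quantity proportional to $\left(1-\tfrac{a_2}{a_1}\right)\mygamma\,p_2^*$. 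Consequently $\mu'(p_2^*) < 0$, and in particular $\frac{d}{dp_2}\mu(p_2^*)\neq 0$, which establishes all four Hopf conditions; the sign is moreover consistent with $E_2$ being \las for $p_2>p_2^*$.

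The main obstacle I anticipate is the transversality algebra: one must keep the real/imaginary bookkeeping straight when evaluating $\lambda'$ at the purely imaginary eigenvalue and, crucially, identify the numerator as (a multiple of) the derivative of $b_1\,b_2 - b_3$, after which the explicit quadratic form of \eqref{Hurwitz} makes the sign immediate. By comparison the factorization at $p_2^*$ and the positivity $b_2(p_2^*)>0$ are routine once the identity \eqref{Hurwitz} is in hand.
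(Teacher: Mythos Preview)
Your proof is correct. The overall structure matches the paper's: both identify the critical value via $b_1b_2-b_3=0$, confirm the purely imaginary pair at $p_2^*$, and show transversality by reducing $\mu'(p_2^*)$ to a positive multiple of $-\left.\tfrac{d}{dp_2}(b_1b_2-b_3)\right|_{p_2^*}$, which is then evaluated from the factored form \eqref{Hurwitz}.

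The one genuine methodological difference lies in how transversality is extracted. You implicitly differentiate the characteristic equation at $\lambda=i\omega_0$ and take real parts, arriving at
\[
\mu'(p_2^*)=-\frac{\left.\tfrac{d}{dp_2}(b_1b_2-b_3)\right|_{p_2^*}}{2\bigl(b_1^2+b_2(p_2^*)\bigr)}.
\]
The paper instead uses the algebraic identity $b_1b_2-b_3=-P(\lambda_1+\lambda_2+\lambda_3)$, writes the right-hand side in the factored form $(\mu-i\omega+\lambda_3)(\mu+i\omega+\lambda_3)\cdot 2\mu$, and differentiates this product directly at $\mu=0$, obtaining the denominator $2\bigl(\lambda_3^2(p_2^*)+\omega^2(p_2^*)\bigr)$. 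Since $\lambda_3(p_2^*)=-b_1$ and $\omega^2(p_2^*)=b_2(p_2^*)$, the two denominators coincide and the final formulas agree exactly. Your route is the more standard Hopf calculation and makes the appearance of $(b_1b_2-b_3)'$ in the numerator transparent; the paper's route avoids any real/imaginary bookkeeping at the cost of invoking the identity $b_1b_2-b_3=-P(-b_1)$, which is slick but less widely known. Either approach is fine here, and the subsequent evaluation of $(b_1b_2-b_3)'|_{p_2^*}$ from \eqref{Hurwitz} is identical in both.
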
 

\begin{proof}
	We consider the system \eqref{M1*}-\eqref{M3*}. We recall that existence of $E_2$ requires $a_1>0.5$ and  $a_2<a_1$. Let $P(x)=x^3 + b_1\,x^2 + b_2\,x + b_3$ be the characteristic polynomial of $J$. From the proof of Theorem \ref{thm:stabE2} we know that $J$ has two purely imaginary eigenvalues unequal to zero if and only if $b_1\,b_2 - b_3 =0$, i.e., $p_2 = p_2^*$.  Thus, for $\lambda_{1,2}(p_2):=\mu(p_2)\pm \omega(p_2)$ it holds  $\mu(p_2^*)= 0$, $\omega(p_2^*)\neq 0$. It remains to show that $\mu'(p_2^*)\neq 0$.\\
	
	Let us rewrite the characteristic polynomial $P(x)$ as \newline $P(x)=\left(x-\lambda_1\right)\,\left(x-\lambda_2\right)\,\left(x-\lambda_3\right)$, where $\lambda_3$ denotes the third eigenvalue. We can verify easily that the relation $b_1\,b_2-b_3=-P(\lambda_1+\lambda_2+\lambda_3)$ holds. We will obtain an expression for $\mu'(p_2^*)$ by computing the derivatives of both sides of this equation with respect to $p_2$.
	\begin{align*}
	&\left. \frac{d}{dp_2} P\left(\lambda_1\left(p_2\right) + \lambda_2\left(p_2\right) + \lambda_3\left(p_2\right)\right) \right|_{p_2 = p_2^*} \\
	= &\left. \frac{d}{dp_2} \left[ \left(\mu\left(p_2\right) - i \, \omega\left(p_2\right) + \lambda_3\left(p_2\right) \right) \,\left(\mu\left(p_2\right) + i \, \omega\left(p_2\right) + \lambda_3\left(p_2\right) \right)\cdot 2 \, \mu\left(p_2\right)\right]  \right|_{p_2 = p_2^*}\\
	= & 2 \, \mu ' \left(p_2^* \right) \, \left(\lambda_3^2\left(p_2^*\right) + \omega^2\left(p_2^*\right)\right)
	\end{align*}
	Now we calculate the derivative with respect to $p_2$ of $b_1\,b_2-b_3$:
	\begin{align*}
	&\left.\frac{d}{dp_2} \left[ -b_1\,b_2+b_3\right] \right|_{p_2 = p_2^*}\\
	= & 
	\textcolor{black}{-\left(1-\frac{1}{2\,a_1}\right)\,\left(1-\frac{a_2}{a_1}\right)^2\,d_3 \, \mygamma d_3 p_2^*}
	\end{align*}
	By equating both expressions for the derivative we obtain:
	\[\mu ' \left(p_2^* \right) =- \frac{\textcolor{black}{\left(1-\frac{1}{2\,a_1}\right)\,\left(1-\frac{a_2}{a_1}\right)^2\,d_3 \, \mygamma d_3 p_2^*}}{2 \, \left(\lambda_3^2\left(p_2^*\right) + \omega^2\left(p_2^*\right)\right)} < 0\]
	for $p_2^*>0$, since $a_1>a_2$ implies $\gamma(a_1,a_2)>0$. We note that $d_3<{d_3^{max}}$ implies $p_2^*>0$.
	since $d_3<d_3^\text{max}$. This completes the proof.
\end{proof}

\begin{remark}
	We note that the bifurcation point does not depend on $k$. 
\end{remark}

\begin{remark}
	It holds $b_1(p_2)=-(\lambda_1(p_2)+\lambda_2(p_2)+\lambda_3(p_2))$ which equals $-\lambda_3(p_2^*)$ if $p_2=p_2^*$. As we noted in the preceding proof, we have $b_1 = - \lambda_3(p_2^*)$ and therefore, we can explicitly compute the eigenvalues of the Jacobian matrix for the case $p_=p_2^*$.
	\[\lambda_3(p_2^*) = -\frac{1}{\mygamma}.\]
	Similarly, $b_3(p_2)=-\lambda_1(p_2)\lambda_2(p_2)\lambda_3(p_2)$ and $b_3 (p_2^*) =- \lambda_3\left(p_2^*\right)\omega^2\left(p_2^*\right)$, which implies
	\begin{align*}
	\omega\left(p_2^*\right)& = \sqrt{{b_3(p_2^*)}{\gamma(a_1,a_2)}} \\&= \sqrt{\left( \frac{1}{\gamma \left( a_1,a_2\right)} - \beta \left( a_1,a_2\right) \, d_3\right) \, \gamma\left(a_1, a_2\right) \, \left( 1- \frac{1}{2 \, a_1}\right) \, d_3}
	\end{align*}
	
	We note that the third eigenvalue $\lambda_3$ has a negative real part, since $\gamma(a_1, a_2)>0$. Consequently, for $p_2=p_2^*$  the orbits of the system exponentially approach a center manifold. On the center manifold, the orbits look essentially like the ones of the Hopf normal form as shown in ref. \cite{Kuz04}. 
\end{remark}
Figure \ref{figHopfOrbits} provides a numerical example for the super-critical Hopf bifurcation. 
\begin{figure}[!htb]%
	\centering
	\includegraphics[width=10cm]{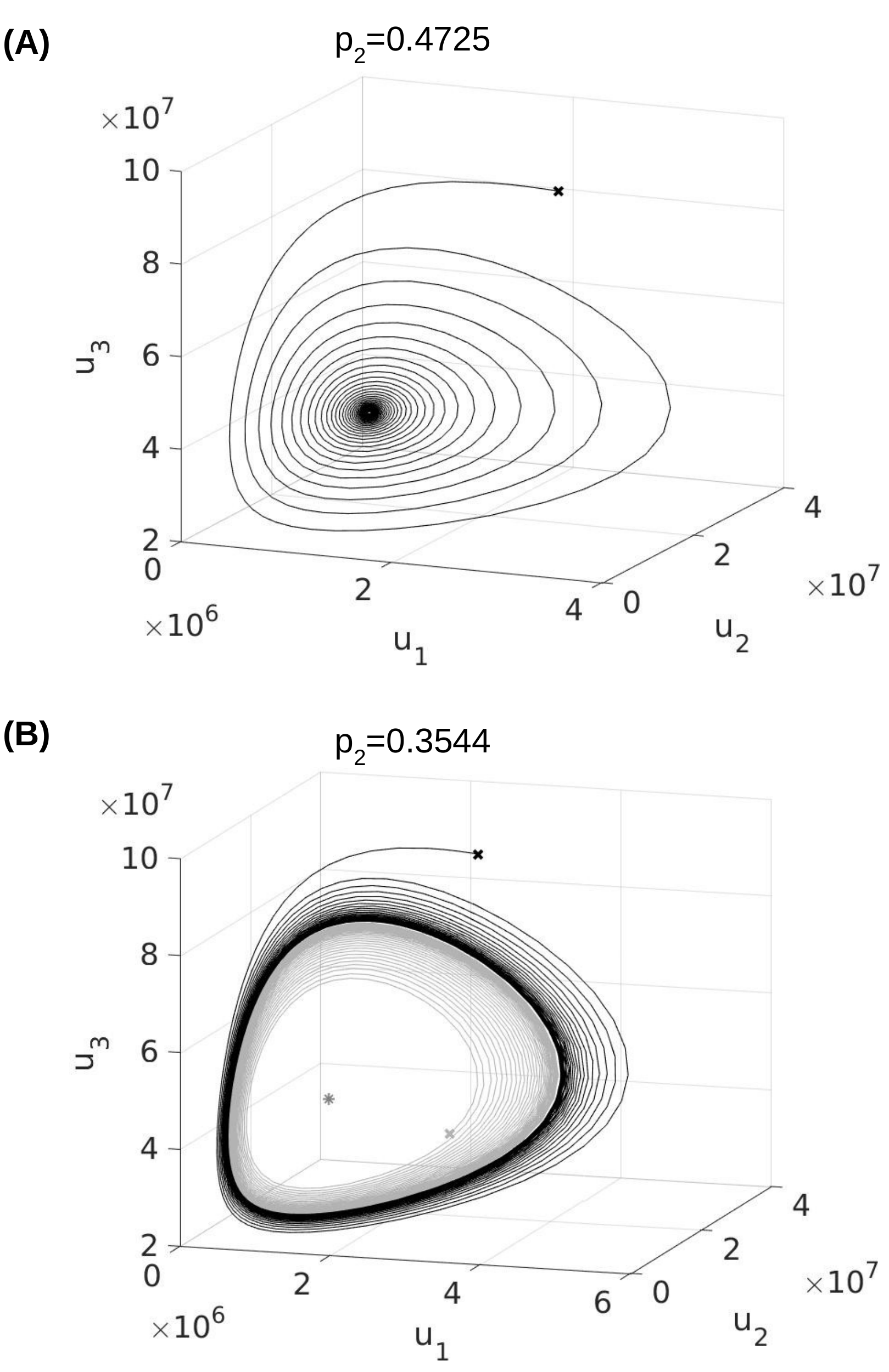}
	
	\caption[Super-critical Hopf bifurcation ]{(A) Phase portrait for $p_2>p_2^*$. The solution converges to the positive equilibrium. Initial condition: $u_1(0)=0.1766\cdot 10^7$, $u_2(0)=1.3082 \cdot 10^7$, $u_3(0)=5.9429\cdot 10^7$. (B) Phase portrait for $p_2<p_2^*$. Existence of a stable limit cycle. Initial conditions: $u_1(0)=0.2717 \cdot 10^7$, $u_2(0)=2.6836\cdot 10^7$, $u_3(0)=9.1429\cdot 10^7$ and $u_1(0)=0.1766\cdot 10^7$, $u_2(0)=1.7443 \cdot 10^7$, $u_3(0)=5.9429\cdot 10^7$. Initial conditions are marked by crosses, the positive equilibrium is marked by "*". Parameters: $a_1=0.7$, $a_2=0.5$, $p_1=1$, $d_3=0.1337$, $k_s=8.75\cdot10^{-9}.$ The Hopf bifurcation occurs at $p_2^*=0.3937$. }
	\label{figHopfOrbits}
\end{figure}

\begin{figure}
	\includegraphics[width=\textwidth]{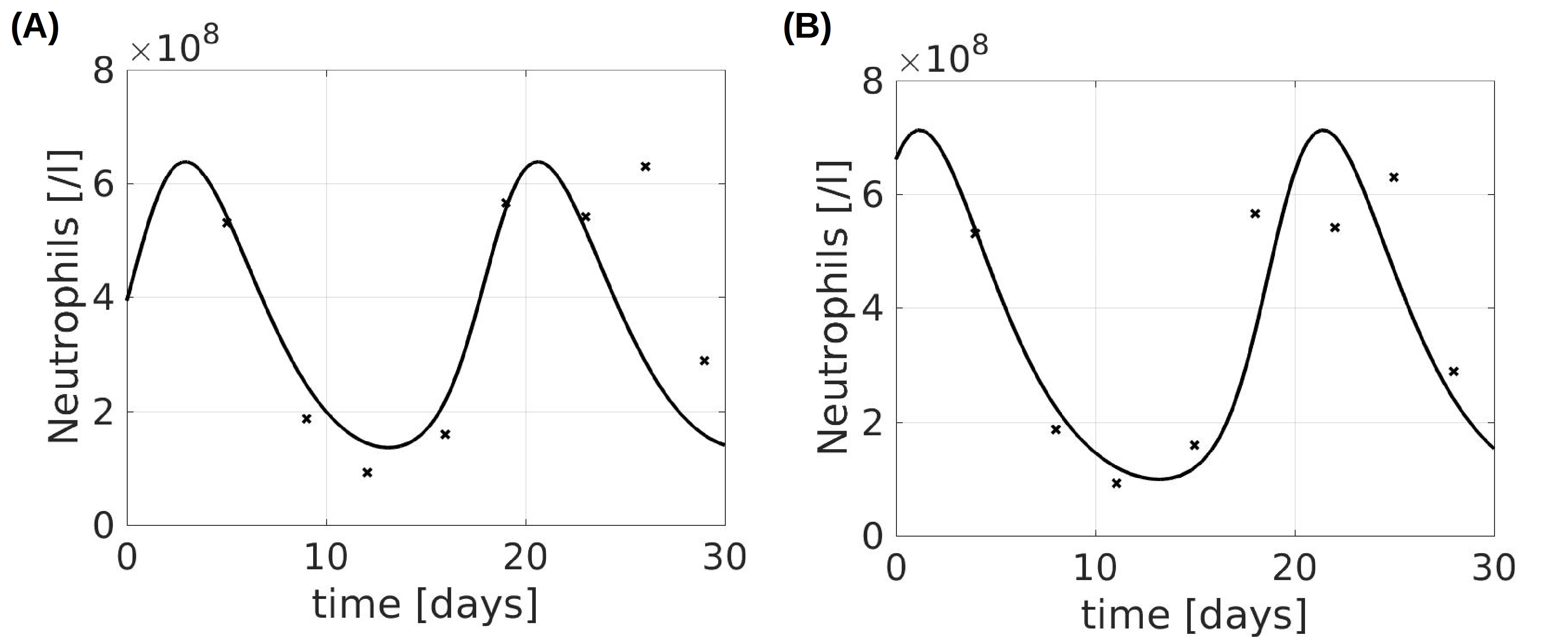}
	\caption{Overlay of patient data and example simulations: Modle simulations qualitatively reproduce neutrophil dynamics in cyclic neutropenia. Patient data taken from ref. \cite{Dale}, Fig. 2A. Parameters: (A) $a_1=0.85$, $p_1=1$, $a_2=0.841/day$, $p_2=0.4/day$, $d_1=0$, $d_2=0.5592/day$, $d_3=0.36765/day$, $k=3.5\cdot 10^{-8}$. (B) $a_1=0.85$, $p_1=0.9293$, $a_2=0.841/day$, $p_2=0.0150/day$, $d_1=0$, $d_2=0.2541/day$, $d_3=2.3/day$, $k=3.2\cdot 10^{-8}$. \label{Fig:CN}}
\end{figure}

\begin{figure}
\centering
\includegraphics[width=\linewidth]{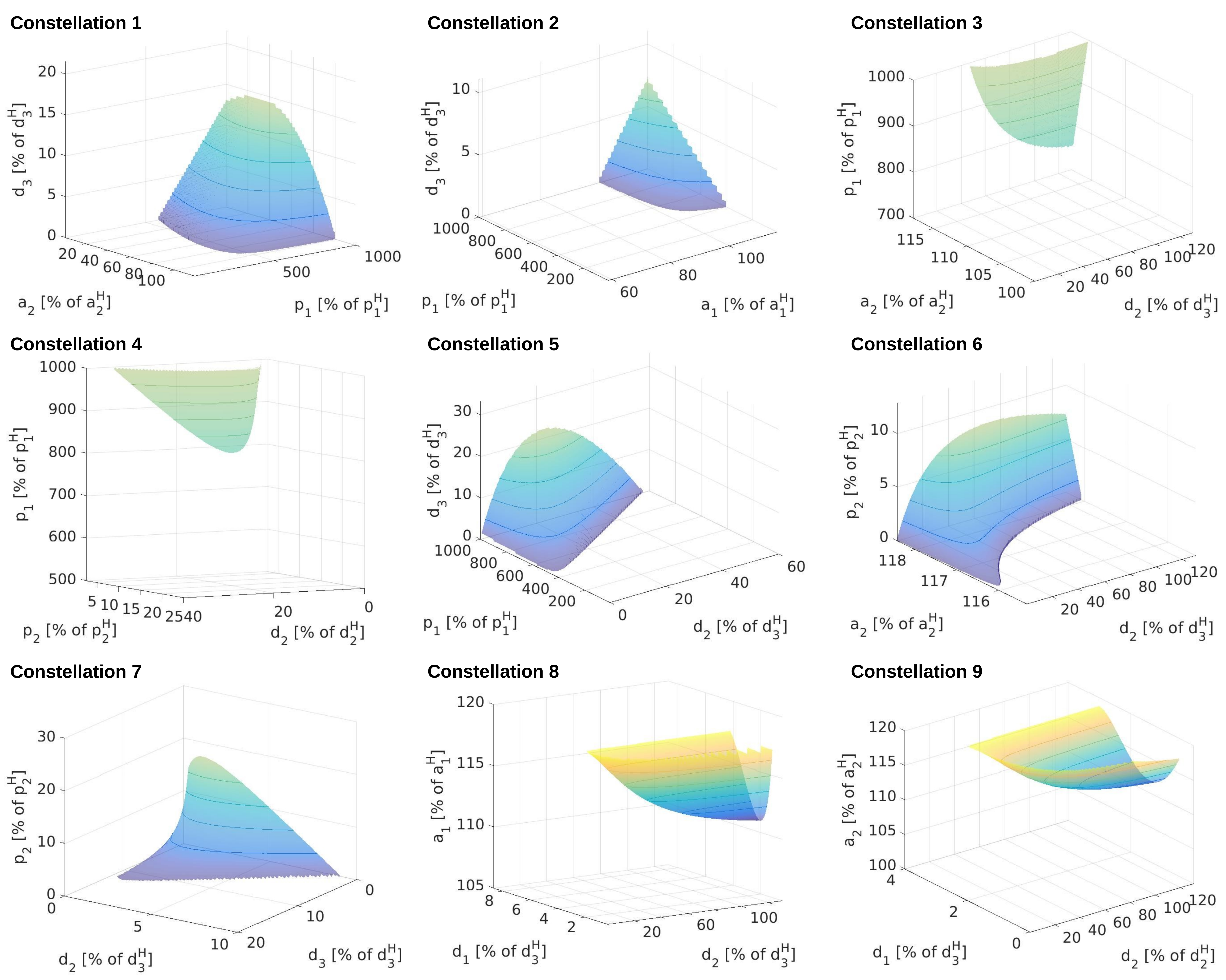}
\caption{{Parameter regions where oscillating solutions exist. Explicit examples for parameter values are provided in Appendix \ref{AppPar}. Parameter values are presented as percentages of the healthy parameters: $a_1^H$ - self-renewal of healthy stem cells,  $a_1^H$ - self-renewal of healthy progenitor cells,  $p_1^H$ - proliferation rate  of healthy stem cells, $p_2^H$ - proliferation rate  of healthy progenitor cells, $d_3^H$ - clearance rate of healthy granulocytes. Since in the healthy system immature cells do not die ($d_1^H=d_2^H=0$), $d_1$ and $d_2$ are given as percentages of $d_3^H$.} }
\label{Fig:Sets}
\end{figure}


\section{Biological implications}
In this section we apply the mathematical results to gain insights into the origin of oscillating blood cell counts. We use our model to systematically study which parameters have to deviate from their physiological values to obtain persistent oscillations. 

\subsection{Numerical studies}
Parameters of model \eqref{M1}-\eqref{M3} have been obtained using a combination of data from literature and patient data. It has been shown that the calibrated model can reproduce blood cell dynamics after bone marrow transplantation and during acute leukemia \cite{CR,BMT,AdvExpMedBiol,Interface,SciRep}. The parameters have been estimated as follows \cite{CR}:\\

$a_1=0.850$,  $p_1=0.1/day$, $a_2=0.841$, $p_2=0.4/day$, $d_1=0.0$,  $d_2=0.0$, $d_3=2.7/day$, $k=1.75\cdot 10^{-9}.$\\

In the following we refer to these parameters as {\it reference values}. To systematically check whether variation of one of these parameters can lead to a Hopf bifurcation, we search, where in the parameter space $p_2$ passes through the bifurcation point $p_2^*$. Biologically plausible intervals for the respective parameters are:

\begin{align*}
a_1&\in(0.5,\ 1)\\
p_1&\in (0/day,\ 1/day)\\
a_2&\in(0,\ 1)\\
p_2&\in (0/day,\ 1/day)\\
d_1&\in(0/day,\ 3/day)\\
d_2&\in(0/day,\ 3/day)\\
d_3&\in(0.1/day,\ 3/day)\\
\end{align*} 

These ranges are motivated as follows. The self-renewal fractions $a_1$ and $a_2$ correspond to the probabilities that a progeny cell belongs to the same maturation stage as its parent cell, therefore they assume values between zero and one. If $a_1\leq0.5$, the stem cell population declines over time \cite{MCM}, which contradicts biological observations \cite{MCM,COISB}. For this reason we assume $a_1>0.5$. A proliferation rate of $1/day$ corresponds to more than one cell division per day, which is the biological upper limit required for genome duplication \cite{AlbertsCC}. The value of $d_3$ corresponds to a {neurophils half life} between $5$ hours and 7 $days$ what is in agreement with measurements in humans \cite{Cartwright,Pillay}. Of note biological studies suggest, that in cyclic neutropenia {neutrophils half life} does not significantly differ from reference values \cite{Guerry}. As neutrophils are considered as one of the cell types with shortest half life, we assume that  $5$ hours is a reasonable lower bound also for the half life of immature cells \cite{Jandl}. For our numerical studies we subdivide the given intervals in 100 equidistant points. \\

There is biological evidence that oscillating blood cell counts may be related to death of immature cells \cite{Grenda}. For this reason we extend our numerical analysis to the case where stem and progenitor cell death rates, denoted as $d_1$ and $d_2$, can be positive. In this case we obtain the following expressions for $b_1$, $b_2$ and $b_3$.

\begin{align*}
b_1&=d_2-\left(\frac{a_2}{a_1}\frac{d_1+p_1}{p_1}-1\right)p_2+d_3\left(1+\frac{   \left(\left(1-\frac{1}{2a_1}\right)p_1-\frac{d_1}{2a_1}\right)\frac{a_2}{a_1}\frac{d_1+p_1}{p_1}  }{  \left( \frac{a_2}{a_1}\frac{d_1+p_1}{p_1}-2  \right)p_1   }\right)\\
b_2&= +d_3\left( \left(1-\frac{1}{2a_1}\right)p_1-\frac{d_1}{2a_1}\right)\frac{d_1+p_1}{p_1}\left( \frac{a_2p_2}{a_1p_1}+\frac{p_2\left(\frac{a_2}{a_1}\frac{d_1+p_1}{p_1}-1\right)-d_2}{p_1-d_1}     \right)\\
&\quad -\left(p_2\left(\frac{a_2}{a_1}\frac{d_1+p_1}{p_1}-1\right)-d_2\right)d_3\left(1+\frac{   \left(\left(1-\frac{1}{2a_1}\right)p_1-\frac{d_1}{2a_1}\right)\frac{a_2}{a_1}\frac{d_1+p_1}{p_1}  }{  \left( \frac{a_2}{a_1}\frac{d_1+p_1}{p_1}-2  \right)p_1   }\right)\\
b_3&=-\left(p_2\left(\frac{a_2}{a_1}\frac{d_1+p_1}{p_1}-1\right)-d_2\right)\left(\left(1-\frac{1}{2a_1}\right)p_1-\frac{d_1}{2a_1}\right)\frac{d_1+p_1}{p_1}d_3
\end{align*}

We numerically check at which locations of the parameter space $b_1b_2-b_3$ changes its sign from positive to negative. In the considered parameter space $b_1$ and $b_3$ are positive.

\begin{remark}
	{Global boundedness of solutions also holds for the extended model with positive $d_1$ and $d_2$. The proof works analogously to the proof of Theorem \ref{thm:ExAndBoundedness} with $B_1:=\frac{1+p_2+d_2-d_1}{2(1-a_1)}$ and $B_2:=\frac{2B_1+d_3+p_2}{2(1-a_2)}$. The expression for the unique strictly positive steady state together with necessary and sufficient criteria for its existence are provided in ref. \cite{MCM}. The same applies to the semi-trivial equilibria.} 
\end{remark}

\subsection{Results}
Numerical simulations show that if only one parameter differs from its reference value, no Hopf-bifurcation is observed. The same holds if two parameters differ from their reference values. 
The minimal requirement for a Hopf-bifurcation to occur is that three parameters deviate from their respective reference values. All scenarios where Hopf bifurcation can occur are summarized in Table \ref{TabHopf}, more details are provided in Appendix \ref{AppPar} and {Figure \ref{Fig:Sets}}. {For some parameters significant deviations from the reference values are required to observe Hopf bifurcation. One example for this is the neutrophil apoptosis rate, where a reduction to less than 25\% of its original value is required. Taking into account the large variations of the respective cell parameters observed in healthy individuals under immune stimulation (5-10 fold), parameter changes of this order of magnitude can be considered biologically realistic \cite{Lord}.}\\

Our findings are in line with the following results from literature:
\begin{enumerate}[1.]
	\item The fact that Hopf bifurcations only occur if multiple parameters deviate from their reference values may explain why oscillating blood cell counts are rarely observed. 
	\item The death rates of immature cells are increased in most parameter configurations. This is in line with experimental findings  \cite{Grenda}. However, increase of immature cell death is not necessary to obtain a Hopf bifurcation (see Constellations 1 and 2 in Table \ref{TabHopf}). 
	\item The model can reproduce neutrophil dynamics in cyclical neutropenia, see Fig. \ref{Fig:CN}, if the feedback parameter $k$ deviates from its reference value.
	\item For certain parameter values the model shows oscillations with a period of several months. Such oscillations have been observed in case of chronic myeloid leukemia \cite{Hirayama,Rodriguez,Gatti}.
	\item Many parameter configurations show increased death rates of stem and/or progenitor cells. This finding can explain why in some patients oscillations are induced by chemotherapeutic agents and other drugs inducing cell death \cite{Baird,Kennedy,Morley2}.
\end{enumerate}

The numerical studies provide the following insights into diseases with oscillating blood cell counts.
\begin{enumerate}[1.]
	
	\item In some parameter configurations $a_1$ or $p_1$ or both are increased. Several studies show that increased stem cell self-renewal and proliferation are linked to malignancy \cite{CR,Interface,Kelly,Kikushige,Wang2}. This may explain why oscillating blood cell counts can be interpreted as a premalignant state \cite{Lensink,Dale3}. However, increased stem cell self-renewal and proliferation are not necessary for a Hopf-bifurcation to occur. This may explain why several studies do not see a relation between oscillating blood cell counts and malignancies \cite{Dale}.
	\item According to the considered model Hopf bifurcations can occur for decreased mature cell clearance $d_3$ but not for increased $d_3$.  This is surprising, since for a long time increased mature cell death has been suspected as the origin of oscillating blood cell counts. Experiments, however, have shown that mature cell clearance is not increased in cyclic neutropenia \cite{Guerry}. In some patients with cyclic neutropenia a slight decrease of neutrophil clearance has been reported  \cite{Guerry}. 
	\item Periodic auto-inflammatory syndromes, such as the PFAPA syndrome are associated with reduced neutrophil apoptosis \cite{Kraszewska-Głomba, Sundqvist}. These syndromes are characterized by periodic inflammations with increased white blood cell counts \cite{Brown}. Our results support the idea that decreased neutrophil apoptosis may contribute to the periodicity of the symptoms.
	\item All detected parameter configurations involve changes in progenitor or mature cell parameters compared to healthy controls. It is controversial whether oscillating blood cell counts require functional deficits on the level of stem cells or whether alterations in the progenitor cell compartment are sufficient. Our model suggests that both scenarios can exist. In some cases stem cell properties ($a_1$, $p_1$, $d_1$) differ from their physiologic reference values (Constellations 1-5 and 8, 9), whereas in other cases they do not (Constellations 6 and 7).
	\item Whenever a Hopf bifurcation occurs, the counts of stem, progenitor and mature cells oscillate. This is in line with bone marrow examinations showing oscillations of immature cells \cite{Guerry}.  Furthermore it fits to the clinical observation that in many patients not only white but also other blood cell counts oscillate \cite{Langlois,Guerry}. Of note the observation of oscillating stem cell counts does not imply that stem cell parameters have to deviate from their reference values.  Alterations of progenitor cell parameters can be sufficient (Constellations 6 and 7). 
	
	\item For many parameter configurations blood cell counts oscillate within physiological bounds. This may suggest that oscillating blood cell counts not necessarily lead to clinical symptoms. The occurrence of oscillating blood cell counts in healthy patients is so far controversial \cite{Morley,Dale5,Dale}.     
	\item If the value of $k$ remains unchanged, all considered parameter configurations lead to neutrophil counts that are too high to be compatible with those observed in cyclic neutropenia \cite{Dale}. However, changes in the parameter $k$ lead to the clinical observed low neutrophil counts. Examples are depicted in Figure \ref{Fig:CN}. This observation is in line with the  experimental finding that response of immature cells to feedback signals is altered in patients with cyclic neutropenia \cite{Hammond}. To observe low neutrophil counts $k$ has to be higher than its reference value. This means that the effect of cytokines is reduced in patients with cyclic neutropenia, as it has been observed experimentally  \cite{Hammond}. 
	\item The bifurcation point is independent of the value of the feedback parameter $k$. This means that changes in the feedback signal that affect all cells simultaneously  cannot produce oscillations. Consequently, substitution of feedback signals (such as G-CSF) cannot prevent oscillations. This is in line with clinical observations  \cite{Hammond3}. This finding supports the idea that alterations in the feedback mechanism may be responsible for the low neutrophil counts in cyclic neutropenia but not for the oscillations.

\end{enumerate}  
\begin{table}
	\begin{tabular}{c |c |c |c |c |c |c |c}
		Constellation&$a_1$ &$p_1$&$d_1$&$a_2$&$p_2$&$d_2$&$d_3$\\
		\hline	
		1& &$\uparrow$&$$&$\downarrow$& & &$\downarrow$\\
		2&$\uparrow$ &$\uparrow$& & & & &$\downarrow$\\
		3&  &$\uparrow$& &$\uparrow$& &$\uparrow$ &\\
		4& &$\uparrow$& & &$\downarrow$&$\uparrow$&\\
		5& &$\uparrow$& & & &$\uparrow$&$\downarrow$\\
		6&  & & &$\uparrow$&$\downarrow$&$\uparrow$&\\
		7& & & & &$\downarrow$&$\uparrow$&$\downarrow$\\
		8&$\uparrow$ & &$\uparrow$& & &$\uparrow$&\\
		9&$$ &&$\uparrow$&$\uparrow$&&$\uparrow$&\\
	\end{tabular}
	\caption{Parameter configurations leading to a Hopf bifurcation\label{TabHopf}}
\end{table}

\section{Discussion}\label{sec:instab}
In this work we prove the occurrence of Hopf bifurcation in a mathematical model of white blood cell formation. The  model describes time evolution of a cell population structured by three maturation stages (stem, progenitor and mature cells) that are regulated by a nonlinear feedback mechanism.  We show that for appropriate parameter choices a super-critical Hopf bifurcation occurs and a stable limit cycle emerges. This constitutes a major difference to the two-compartment version of the model that distinguishes only between mature and immature cells and has a globally stable steady state \cite{Nakata}. This finding demonstrates that the number of maturation stages can impact dynamical features of the system. \\

The considered model has been applied to clinical data and shows a good agreement with reality \cite{SCDev,BMT,CR,Interface,AdvExpMedBiol,SciRep}. It predicts that a Hopf bifurcation can occur for biologically plausible parameters and thus provides possible explanations for disease mechanisms that lead to oscillating blood cell counts. Our  systematic numerical study suggests that sustained oscillations only occur if multiple parameters deviate from their physiological reference values. This finding is in line with the observation that oscillating blood cell counts are rarely observed. \\

Biological data and theoretical results have linked the occurrence of oscillating blood cell counts to increased death rates of immature cells. This finding is supported by our model, however, our results suggest that increased immature cell death is not necessary to obtain a stable limit cycle. Alterations of mature cell death rates together with stem or progenitor cell self-renewal and proliferation are also sufficient. The impact of perturbed self-renewal on oscillating blood cell counts has been discussed in the context of a linear model  \cite{Dingli}.\\

One of the most common diseases exhibiting periodic oscillations of white blood cells it cyclic neutropenia. Our model suggests that the low cell numbers detected in these patients can only be reproduced if the response of immature cells to feedback signals is reduced. This result is in line with in vitro experimental findings. Interestingly, the occurrence of Hopf-bifurcation is independent of the parameters that describe the feedback signal. This suggests that alterations in the feedback that simultaneously affect all cell types cannot lead to oscillating cell counts. However, in presence of  parameter configurations that lead to oscillations, an additional alteration of the feedback signal can impact their amplitude. This result leads to the hypothesis that the experimentally detected reduced response of immature cells to cytokines in patients with cyclic neutropenia is the pathogenic mechanism leading to low cell counts; however it does not contribute to the occurrence of oscillations. This hypothesis is further supported by clinical data showing that administration of growth factors such as G-CSF increases white blood cell counts but does not lead to cessation of the periodic oscillations \cite{Migliaccio,Hammond3}.   \\

The finding that multiple different parameter configurations can result in oscillating blood cell counts may explain parts of the heterogeneity among cyclic neutropenia patients. It furthermore suggests that different detected mutations \cite{Makaryan,Germeshausen,Alangari,Boo,Whited,Cipe} may lead to different pathogenic mechanisms that result in similar symptoms.\\

In summary, we have proven the occurrence of a Hopf bifurcation in a non-linear three-compartment model of white blood cell formation. The Hopf bifurcation is a unique feature of the three-compartment setting and does not occur in the  2-compartment version of the model. We identify biologically plausible parameter sets that lead to a stable limit cycle and relate them to clinical and experimental findings. This quantitative approach can help to understand the pathogenic mechanisms and the clinical heterogeneity of different diseases that lead to oscillating blood cell counts.






\appendix

\section{Supplementary calculations to the proof of Theorem \ref{thm:stabE2} }\label{app:proofsstab}

In the following we provide the calculations leading to equation \eqref{Hurwitz}.  

\begin{align*}
& \, b_1\,b_2 - b_3\\
= &\left[\left(1-\frac{a_2}{a_1}\right)\,p_2 +  		\left(1-\frac{a_2}{a_1}\,\left(1-\frac{1}{2\,a_1}\right)\,\frac{1}{2-\frac{a_2}{a_1}}\right)\,d_3\right]\\
&\cdot \left[\left(1-\frac{a_2}{a_1}\right)\,\left(1-\frac{a_2}{a_1}\,\left(1-\frac{1}{2\,a_1}\right)\,\frac{1}{2-\frac{a_2}{a_1}}\right) - \left(1-\frac{1}{2\,a_1}\right)\,\left(1-2\,\frac{a_2}{a_1}\right)\right]\,d_3\,p_2\\
&- \left(1-\frac{1}{2\,a_1}\right)\,\left(1-\frac{a_2}{a_1}\right)\,d_3\,p_2\\ \displaybreak[0]
=& \left[\left(1-\frac{a_2}{a_1}\right)\,p_2 +  		\left(1-\frac{a_2}{a_1}\,\left(1-\frac{1}{2\,a_1}\right)\,\frac{1}{2-\frac{a_2}{a_1}}\right)\,d_3\right]\\
&\cdot \left(1-\frac{1}{2\,a_1}\right)\,\left(1-\frac{a_2}{a_1}\right)\, \left[\frac{1}{1-\frac{1}{2\,a_1}} - \frac{a_2}{a_1}\,\frac{1}{2-\frac{a_2}{a_1}} - \frac{1-2\,\frac{a_2}{a_1}}{1-\frac{a_2}{a_1}} \right]\,d_3\,p_2\\
&- \left(1-\frac{1}{2\,a_1}\right)\,\left(1-\frac{a_2}{a_1}\right)\,d_3\,p_2\\ \displaybreak[0]
=& \left(1-\frac{1}{2\,a_1}\right)\,\left(1-\frac{a_2}{a_1}\right)\,\left(\left[\left(1-\frac{a_2}{a_1}\right)\,p_2 +  		\left(1-\frac{a_2}{a_1}\,\left(1-\frac{1}{2\,a_1}\right)\,\frac{1}{2-\frac{a_2}{a_1}}\right)\,d_3\right] \right.\\
&\left.\cdot \left[\frac{1}{2\,a_1}\frac{1}{1-\frac{1}{2\,a_1}} + \frac{a_2}{a_1}\,\frac{1}{\left(2-\frac{a_2}{a_1}\right)\,\left(1-\frac{a_2}{a_1}\right)} \right] -1\right)\,d_3\,p_2\\
=& \left(1-\frac{1}{2\,a_1}\right)\,\left(1-\frac{a_2}{a_1}\right)\,\left(\left[\left(1-\frac{a_2}{a_1}\right)\,p_2 +  		\mybeta \,d_3\right] \cdot \mygamma -1\right)\,d_3\,p_2.
\end{align*}
\newpage

\section{Parameter configurations leading to Hopf bifurcation}\label{AppPar}

\begin{tabular}{p{8cm}p{8cm}}
	{\bf Constellation 1:}\newline
	$p_1$ increased\newline
	$a_2$ decreased \newline
	$d_3$ decreased ($<0.5/day$)\newline
	Example: \begin{itemize}
		\item []$p_1=0.7171/day$
		\item [] $a_2=0.32$
		\item [] $d_3=0.132/day$
	\end{itemize}
	
	&	
	{\bf Constellation 2:}\newline
	$p_1$ increased\newline
	$a_1$ increased\newline
	$d_3$ decreased (close to $0.1/day$)\newline
	Example: 
	\begin{itemize}
		\item [] $p_1=0.9697/day$
		\item [] $a_1=0.99$
		\item [] $d_3=0.132/day$
	\end{itemize}
	\\
	{\bf Constellation 3:}\newline
	$p_1$ increased\newline
	$a_2$ increased (close to 1)\newline
	$d_2$ increased\newline
	Example:
	\begin{itemize}
		\item [] $p_1=0.7778/day$
		\item [] $a_2=0.99$
		\item []$d_2=2.6644/day$
	\end{itemize} 
	
	&
	{\bf Constellation 4:}\newline
	$p_1$ increased\newline
	$p_2$ decreased\newline
	$d_2$ increased\newline
	Example:
	\begin{itemize}
		\item [] $p_1=0.8687/day$
		\item [] $p_2=0.0201/day$
		\item [] $d_2=0.2541/day$
	\end{itemize}  
	\\

	{\bf Constellation 5:}\newline
	$p_1$ increased\newline
	$d_2$ increased \newline
	$d_3$ decreased \newline
	Example:
	\begin{itemize}
		\item [] $p_1=0.707/day$
		\item [] $d_2=0.2541/day$
		\item [] $d_3=0.132/day$
	\end{itemize}  
	
	&
	{\bf Constellation 6:}\newline
	$p_2$ decreased (close to 0.01)\newline
	$a_2$ increased (close to 1) \newline
	$d_2$ increased \newline
	Example:
	\begin{itemize}
		\item [] $p_2=0.01/day$
		\item []$a_2=0.99$
		\item []$ d_2=0.5287/day$
	\end{itemize} 

\end{tabular}
\pagebreak

\begin{tabular}{p{8cm}p{8cm}}
	
	{\bf Constellation 7:}\newline
	$p_2$ decreased\newline
	$d_2$ increased \newline
	$d_3$ decreased \newline
	Example: 
	\begin{itemize}
		\item [] $p_2=0.01/day$
		\item [] $d_2=0.0405/day$
		\item [] $d_3=0.132/day$
	\end{itemize} 
	
	&
	{\bf Constellation 8:}\newline
	$a_1$ increased\newline
	$d_1$ slightly increased ($<0.1/day$)\newline
	$d_2$ increased \newline
	Example:
	\begin{itemize}
		\item [] $a_1=0.95$
		\item [] $d_1=0.0405/day$
		\item [] $d_2=2.7559/day $
	\end{itemize} 
	
	\\
	
	{\bf Constellation 9:}\newline
	$a_2$ increased\newline
	$d_1$ slightly increased ($<0.05/day$)\newline
	$d_2$ increased \newline
	Example: 
	\begin{itemize}
		\item [] $a_2=0.95$
		\item []$d_1=0.0405/day$
		\item []$d_2=2.5423/day$
	\end{itemize} 
	
\end{tabular}
{Biologically plausible parameter regions where the Hopf-bifurcation exists are visualized in Figure \ref{Fig:Sets}}. {The reported values for $a_1$ and $a_2$ correspond to the self-renewal fraction in presence of maximal stimulation. The self-renewal fraction at time $t$ is given by $a_1s(t)$ and $a_2s(t)$ with $s(t)<1$.} 

\section*{Acknowledgments}
This work was supported by research funding from the German Research Foundation DFG (Collaborative Research Center SFB 873, Maintenance and Differentiation of Stem Cells in Development and Disease, subproject B08).   


\include{Literaturverzeichnis}

%
%

\bibliographystyle{abbrv}   
\bibliography{Bibliography.bib}   

\end{document}